\newtheorem{theorem}{Theorem}
\newtheorem{proposition}[theorem]{Proposition}
\newtheorem{lemma}[theorem]{Lemma}
\theoremstyle{definition}
\newtheorem{conjecture}[theorem]{Conjecture}
\theoremstyle{remark}
\numberwithin{equation}{section}
\numberwithin{figure}{section}
\renewcommand{\epsilon}{\varepsilon}
\newcommand{\loc}{{\rm loc}}
\newcommand{\N}{\mathbb{N}}
\newcommand{\R}{\mathbb{R}}
\newcommand{\eps}{\varepsilon}
\newcommand{\sph}{{\mathbb{S}}}
\DeclareMathOperator{\dist}{dist}
\begin{document}

\title[An exceptional property of the 1D Bianchi-Egnell inequality]{An exceptional property of the one-dimensional Bianchi-Egnell inequality}

\author[T. König]{Tobias König}
\address[T. König]{Goethe-Universität Frankfurt, Institut für Mathematik, 
Robert-Mayer-Str. 10, 60325 Frankfurt am Main, Germany.}
\email{koenig@mathematik.uni-frankfurt.de}

\begin{abstract}
In this paper, for $d \geq 1$ and $s \in (0,\frac{d}{2})$, we study the Bianchi-Egnell quotient
\[ \mathcal Q(f) = \inf_{f \in \dot{H}^s(\mathbb R^d) \setminus \mathcal B} \frac{\|(-\Delta)^{s/2} f\|_{L^2(\mathbb R^d)}^2 - S_{d,s} \|f\|_{L^{\frac{2d}{d-2s}}(\mathbb R^d)}^2}{\text{dist}_{\dot{H}^s(\mathbb R^d)}(f, \mathcal B)^2}, \qquad f \in \dot{H}^s(\mathbb R^d) \setminus \mathcal B, \]
where $S_{d,s}$ is the best Sobolev constant and $\mathcal B$ is the manifold of Sobolev optimizers. By a fine asymptotic analysis, we prove that when $d = 1$, there is a neighborhood of $\mathcal B$ on which the quotient $\mathcal Q(f)$ is larger than the lowest value attainable by sequences converging to $\mathcal B$. This behavior is surprising because it is contrary to the situation in dimension $d \geq 2$ described recently in \cite{Koenig}. 

This leads us to conjecture that for $d = 1$, $\mathcal Q(f)$ has no minimizer on $\dot{H}^s(\mathbb R^d) \setminus \mathcal B$, which again would be contrary to the situation in $d \geq 2$. 

As a complement of the above, we study a family of test functions which interpolates between one and two Talenti bubbles, for every $d \geq 1$. For $d \geq 2$, this family yields an alternative proof of the main result of \cite{Koenig}. For $d =1$ we make some numerical observations which support the conjecture stated above. 
 \end{abstract}

\date{February 29, 2024}
\thanks{\copyright\, 2024 by the author. This paper may be reproduced, in its entirety, for non-commercial purposes. The author thanks Tobias Weth for helpful discussions. \\ \emph{Data availability:} Data sharing not applicable to this article as no datasets were generated or analysed during
the current study.}

\maketitle

\section{Introduction and main result}

For any space dimension $d \geq 1$, the Sobolev inequality on $\R^d$ for $s \in (0,\frac{d}{2})$ reads as 
\begin{equation}
\label{sobolev Rd}
\|(-\Delta)^{s/2} f\|_{L^2(\R^d)}^2 \geq S_{d,s} \|f\|_{L^{p}(\R^d)}^2, \qquad f \in \dot{H}^s(\R^d),
\end{equation}
where 
\[p = \frac{2d}{d - 2s}.\]

 The unique optimizers of \eqref{sobolev Rd} are the \emph{bubble functions} \cite{Au, Ta, Li} 
\begin{equation}
\label{M definition}
\mathcal B := \left \{ x \mapsto c (a + |x-b|^2)^{-\frac{d-2s}{2}} \, : \, a > 0, \, b \in \R^d, \, c \in \R \setminus \{0\} \right \}. 
\end{equation}
It turns out that the \emph{quantitative stability} of \eqref{sobolev Rd} can be formulated in terms of the quotient
\[ \mathcal Q(f) =  \frac{\|(-\Delta)^{s/2} f\|_{L^2(\mathbb R^d)}^2 - S_{d,s} \|f\|_{L^{p}(\mathbb R^d)}^2}{\dist_{\dot{H}^s(\mathbb R^d)}(f, \mathcal B)^2}, \qquad f \in \dot{H}^s(\R^d) \setminus \mathcal B, \]
between the 'Sobolev deficit' and the $\dot{H}^s(\mathbb R^d)$-distance of the optimizers. 
A famous inequality due, for $s = 1$, to Bianchi and Egnell \cite{BiEg} (see \cite{ChFrWe} for general $s \in (0, \frac{d}{2})$) says that there is $c_{BE}(s) > 0$ such that 
\begin{equation}
\label{bianchi egnell Rd}
\mathcal Q(f) \geq c_{BE}(s) \qquad \text{ for all } f \in \dot{H}^s(\R^d). 
\end{equation}
This inequality is optimal in the sense that in the denominator $\dist_{\dot{H}^s(\R^d)}(f, \mathcal B)^2$ of $\mathcal Q(f)$, the distance cannot be measured by a stronger norm than $\dot{H}^s(\R^d)$ and the exponent $2$  cannot be replaced by a smaller one if inequality \eqref{bianchi egnell Rd} is to be satisfied with a strictly positive constant on the right hand side. 

The proof strategy of Bianchi and Egnell (and of \cite{ChFrWe} for general $s \in (0, d/2)$) consists in proving first that 
\[ \liminf_{n \to \infty} \mathcal Q(f_n) \geq c_{BE}^{\text{loc}}(s) \]
for every sequence $f_n$ which converges (in $\dot{H}^s(\R^d)$) to $\mathcal B$, for an explicit constant $c_{BE}^{\text{loc}}(s) = \frac{4s}{d + 2s + 2}$. Then \eqref{bianchi egnell Rd} can be deduced from this by a compactness argument. For $s = 1$, using completely new ideas, the authors of the paper \cite{DoEsFiFrLo} have recently given a quantitative version of this argument  which permits to give an explicit lower bound on $c_{BE}(1)$.

By definition, we have $c_{BE}^\text{loc}(s) \geq c_{BE}(s)$. Only recently, it has been observed in \cite{Koenig} that when $d \geq 2$, there is $\rho \in \dot{H}^s(\R^d)$ such that for every $\eps > 0$ small enough, the function $f_\eps(x) = (1 + |x|^2)^{-\frac{d-2s}{2}} + \eps \rho(x)$ satisfies
\[ \mathcal Q(f_\eps) < c_{BE}^{\text{loc}}(s). \]
In particular, this shows that in fact $c_{BE}(s) < c_{BE}^{\text{loc}}(s)$ strictly, for every $d \geq 2$ and $s \in (0, \frac{d}{2})$. 

Therefore, the following theorem about the situation in dimension $d = 1$, which \cite{Koenig} makes no statement about, comes as a surprise. 

\begin{theorem}
\label{theorem d=1}
Let $d = 1$ and $s \in (0,\frac{1}{2})$. Then there is a neighborhood $U \subset \dot{H}^s(\R)$ of $\mathcal B$ such that for every $f \in U \setminus \mathcal B$ one has 
\[ \mathcal Q(f) \geq c_{BE}^{\text{loc}}(s). \]
\end{theorem}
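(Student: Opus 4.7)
The plan is an asymptotic expansion of $\mathcal{Q}(f)$ around the bubble manifold $\mathcal{B}$, combined with a Fourier/parity argument specific to $d=1$. I argue by contradiction: assume there is a sequence $f_n \in \dot{H}^s(\R) \setminus \mathcal{B}$ with $\dist_{\dot{H}^s}(f_n, \mathcal{B}) \to 0$ and $\mathcal{Q}(f_n) < c_{BE}^{\text{loc}}(s)$. Using the translation-dilation-scaling invariance, after a subsequence I may assume the closest bubble to $f_n$ is a fixed $U \in \mathcal{B}$, and write $f_n = U + r_n$ with $r_n \to 0$ in $\dot{H}^s(\R)$ and $r_n \perp T_U \mathcal{B}$. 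Setting $\eps_n := \|r_n\|_{\dot{H}^s}$ and $\rho_n := r_n / \eps_n$, minimality gives $\dist(f_n, \mathcal{B})^2 = \eps_n^2$.

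The spectral analysis underlying $c_{BE}^{\text{loc}}(s)$ (see \cite{BiEg, ChFrWe}) gives that the quadratic form $\tfrac{1}{2} N''(U)$ on $(T_U \mathcal{B})^\perp$ has lowest eigenvalue $2 c_{BE}^{\text{loc}}$ attained on a finite-dimensional subspace $E_1$, which via stereographic projection corresponds to the $2$-dimensional space of degree-$2$ spherical harmonics on $\mathbb{S}^1$, spanned by $\cos(2\theta)$ and $\sin(2\theta)$. A spectral gap $\delta > 0$ separates this eigenvalue from the rest of the spectrum on $(E_1 \oplus T_U \mathcal{B})^\perp$, so if $\rho_n$ retained a non-vanishing component there, the quadratic part alone would push $\mathcal{Q}(f_n)$ above $c_{BE}^{\text{loc}}$, contradicting the hypothesis. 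Hence $\rho_n \to \phi$ for some $\phi \in E_1$ with $\|\phi\|_{\dot{H}^s} = 1$.

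Now expand $N(U + \eps\phi)$ to order $\eps^3$. After writing $\|U + \eps\phi\|_p^2 = (\|U+\eps\phi\|_p^p)^{2/p}$ and Taylor-expanding, the cubic coefficient is a combination of $\int U^{p-3}\phi^3$, $(\int U^{p-1}\phi)(\int U^{p-2}\phi^2)$, and $(\int U^{p-1}\phi)^3$. The last two vanish thanks to $\int U^{p-1}\phi = 0$, which follows from $U \in T_U \mathcal{B}$, $\phi \perp T_U \mathcal{B}$, and the Euler-Lagrange equation $\ds U \propto U^{p-1}$. So only $\int U^{p-3}\phi^3$ survives, and here is the specifically one-dimensional point: via stereographic projection this becomes an integral over $\mathbb{S}^1$ of the cube of a degree-$2$ harmonic; such a harmonic has Fourier frequencies $\pm 2$, so its cube has frequencies in $\{\pm 2, \pm 6\}$, none equal to $0$, and the integral vanishes. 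For $d \geq 2$ the analogous Gaunt-type coefficient $\int_{\mathbb{S}^d}(Y_2)^3$ is generically nonzero — precisely the feature exploited by \cite{Koenig}.

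With the cubic term vanishing, the leading correction is quartic, yielding
\[ N(f_n) = c_{BE}^{\text{loc}}(s)\, \eps_n^2 + c\, \eps_n^4 + o(\eps_n^4) \]
with $c > 0$, to be verified by explicit computation: the quartic coefficient is proportional to $3(p-1)(\int U^{p-2}\phi^2)^2/\|U\|_p^p - (p-3)\int U^{p-4}\phi^4$, and its positivity for every unit $\phi \in E_1$ is to be established via the explicit form of $\phi$ on $\mathbb{S}^1$. Dividing by $\eps_n^2$ would then give $\mathcal{Q}(f_n) > c_{BE}^{\text{loc}}(s)$ for $n$ large, the desired contradiction. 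The main obstacle is the precise bookkeeping at cubic order: tracking the contribution of the residual $\rho_n - \phi$ (which is $o(1)$ in $\dot{H}^s$ but may contribute via cross-terms and via the dependence of the closest bubble $U$ on $f_n$), verifying that every candidate cubic contribution vanishes in $d=1$ (not merely the leading $\int U^{p-3}\phi^3$ term), and computing the sign of the quartic coefficient in the fractional setting $s \in (0, 1/2)$.
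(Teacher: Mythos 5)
Your outline follows essentially the same route as the paper (reduce to the sphere picture where the closest optimizer is a constant, isolate the degree-$2$ component, observe $\int_{\mathbb S^1}\phi^3=0$ for $\phi\in E_2$, and win at quartic order), but two of the issues you defer are not bookkeeping — they are where the actual proof lives, and your proposed final computation is not the right one. The decisive point you miss is the \emph{cubic cross-term} between the leading $E_2$ part and the residual. Writing $u_n=1+\mu_n(r_n+\eta_n)$ with $r_n=\sin 2\theta$ and $\eta_n\in E_{\geq 3}$, $\|\eta_n\|\to 0$, the expansion of $\|u_n\|_p^2$ contains $(p-2)\mu_n^3\int r_n^2\eta_n$. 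Since $r_n^2=\tfrac12-\tfrac12\cos 4\theta$, this term sees exactly the $\cos 4\theta$ Fourier coefficient $b_{4,n}$ of $\eta_n$, and it is linear in $\eta_n$, hence not controlled by $o(\mu_n^4)$ or by $o(\|\eta_n\|^2)$. The paper absorbs it by completing the square against the positive quadratic excess $\alpha(1)\bigl(\tfrac{\alpha(4)}{\alpha(2)}-1\bigr)\pi b_{4,n}^2$, which produces a \emph{negative} leftover of size $-\alpha(1)\bigl(\tfrac{\alpha(4)}{\alpha(2)}-1\bigr)^{-1}\pi\tfrac{(p-2)^2}{16}\mu_n^2$ at exactly the same order as the quartic gain. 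The final sign is therefore not the positivity of your coefficient $3(p-1)(\int U^{p-2}\phi^2)^2/\|U\|_p^p-(p-3)\int U^{p-4}\phi^4$ (which on $\mathbb S^1$ equals a multiple of $p+1>0$ and is trivially positive), but of
\[ p+1-\frac{p-2}{\tfrac{\alpha(4)}{\alpha(2)}-1}=\frac{1+2s}{12}, \]
a near-cancellation in which the cross-term eats almost the entire quartic gain. A proof that only checks the pure-$\phi$ quartic coefficient, as you propose, does not establish the theorem.

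The second genuine gap is the justification of the fourth-order Taylor expansion of $t\mapsto|1+t|^p$ when $p<4$, i.e.\ $s\in(0,\tfrac14)$: the function is not $C^4$ at $t=-1$, and since $H^s(\mathbb S^1)$ does not embed into $L^\infty$, the perturbation $\rho_n$ need not be uniformly small. The paper handles this by splitting $\mathbb S^1$ into $\mathcal C_n=\{|\rho_n|>\tfrac12\}$ and its complement, expanding only to second order on $\mathcal C_n$ and using $|\mathcal C_n|\lesssim\mu_n^p$ (Chebyshev plus Sobolev) to show the discrepancy is $o(\mu_n^4+\mu_n^2\|\eta_n\|^2)$. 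Your proposal does not address this at all. Two smaller remarks: your spectral-gap step showing the residual in $(E_1\oplus T_U\mathcal B)^\perp$ is $o(1)$ relative to the $E_2$ part is correct and matches the paper's Lemma on the decomposition $\rho_n=\mu_n(r_n+\eta_n)$; and your identification of the vanishing cubic terms via $\int U^{p-1}\phi=0$ and the Fourier-frequency argument for $\int_{\mathbb S^1}\phi^3$ is sound.
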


In view of Theorem \ref{theorem d=1} and the preceding discussion it is tempting to formulate the following conjecture: 

\begin{conjecture}
\label{conjecture BE}
Let $d = 1$ and $s \in (0, \frac{1}{2})$. Then $c_{BE}(s) = c_{BE}^{\text{loc}}(s)$ and any minimizing sequence for $c_{BE}(s)$ converges to $\mathcal B$. In particular, \eqref{bianchi egnell Rd} does not admit a minimizer. 
\end{conjecture}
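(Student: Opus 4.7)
The plan is to carry out a careful asymptotic expansion of $\mathcal Q(f)$ in a neighborhood of $\mathcal B$ and show that the leading correction to $c_{BE}^{\text{loc}}(s)$ is non-negative when $d = 1$. Using the translation, scaling, and multiplicative invariances of $\mathcal Q$, I would reduce to the case where the closest bubble to $f$ is $U(x) := (1+x^2)^{-\frac{1-2s}{2}}$, and write $f = U + r$ with $r$ orthogonal in $\dot H^s(\R)$ to $T_U \mathcal B = \operatorname{span}(U, \partial_a U, \partial_b U)$. I would then spectrally decompose $r = r_\star + r_\perp$ with respect to the linearized operator at $U$, where $r_\star$ lies in the eigenspace whose eigenvalue $\mu = \frac{3+2s}{3-2s}$ is responsible for $c_{BE}^{\text{loc}}(s) = 1 - 1/\mu$, and $r_\perp$ spans the higher eigenspaces. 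Via the stereographic projection $x = \tan(\theta/2)$, the eigenspace containing $r_\star$ pulls back to the degree-$2$ spherical harmonics on $\Sph^1$, spanned by $\cos 2\theta$ and $\sin 2\theta$.

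Next, I would Taylor-expand $N(f) := \|\ds f\|_2^2 - S_{1,s} \|f\|_p^2$ in powers of $r$. The quadratic part yields
\[
\mathcal L(r, r) \geq c_{BE}^{\text{loc}}(s) \, \|\ds r\|_2^2 + \gamma \, \|\ds r_\perp\|_2^2,
\]
where $\gamma > 0$ is the spectral gap between $\mu$ and the next eigenvalue. The cubic part is proportional to $-\int_\R U^{p-3} r^3 \, dx$, and the crux of the argument is the vanishing identity
\[
\int_\R U^{p-3} r_\star^3 \, dx = 0 \qquad \text{for every } r_\star \text{ in the eigenspace of } \mu.
\]
I would verify this via the stereographic pullback: a direct calculation shows the $s$-dependent powers of $1+x^2$ combine so that the integrand becomes $Y(\theta)^3 \, d\theta$ with $Y = \alpha \cos 2\theta + \beta \sin 2\theta$, and since cubes of $\cos 2\theta$ and $\sin 2\theta$ have Fourier support only at frequencies $\pm 2$ and $\pm 6$, the integral over $\Sph^1$ vanishes. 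This dimensional miracle is specific to $\Sph^1$: on $\Sph^d$ for $d \geq 2$, triple products of degree-$2$ harmonics admit a nonzero degree-$0$ component, which is precisely the mechanism exploited in \cite{Koenig}.

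Given the cubic vanishing on the dangerous eigenspace, the remaining cubic contributions $\int U^{p-3}(3 r_\star^2 r_\perp + 3 r_\star r_\perp^2 + r_\perp^3) \, dx$ all contain at least one factor of $r_\perp$. These I would bound via Hölder's inequality and the Sobolev embedding and then absorb through Young's inequality into the gap term $\gamma \|\ds r_\perp\|_2^2$ plus a quartic remainder. The intrinsic quartic terms in the expansion are $O(\|r\|_{\dot H^s}^4)$, hence subleading next to $c_{BE}^{\text{loc}} \|r\|^2$ for small $r$. Combining with the expansion $\dist(f, \mathcal B)^2 = \|r\|_{\dot H^s}^2 + O(\|r\|_{\dot H^s}^4)$, which follows from $r \perp T_U \mathcal B$, one arrives at $\mathcal Q(f) \geq c_{BE}^{\text{loc}}(s)$ for $f$ in a sufficiently small neighborhood of $\mathcal B$.

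The hard part will be to make the Taylor expansion rigorous uniformly in $s \in (0, 1/2)$. When $s < 1/4$ one has $p < 3$ and the nonlinearity $t \mapsto |t|^p$ is no better than $\mathcal C^{1, p-2}$ at the origin, so the remainder estimates must exploit the positivity and polynomial decay of $U$ via weighted control of integrals like $\int U^{p-k} r^k \, dx$ for $k \geq 3$. A secondary subtlety is verifying the vanishing identity across the full two-dimensional eigenspace (including arbitrary linear combinations of the even and odd basis elements), rather than just on a parity-distinguished basis.
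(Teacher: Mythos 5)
There are two genuine gaps here. First, the statement you are addressing is a \emph{conjecture}, and your proposal --- even if completed --- would only establish the local statement that $\mathcal Q(f) \geq c_{BE}^{\text{loc}}(s)$ in a neighborhood of $\mathcal B$, which is Theorem \ref{theorem d=1} of the paper (proved there by essentially the strategy you outline, transplanted to $\mathbb S^1$). The conjecture asserts the \emph{global} identity $c_{BE}(s) = c_{BE}^{\text{loc}}(s)$, i.e.\ that the quotient stays above $c_{BE}^{\text{loc}}(s)$ on all of $\dot H^s(\R) \setminus \mathcal B$, together with convergence of all minimizing sequences to $\mathcal B$. No local expansion can deliver this: for $d \geq 2$ one has the strict inequality $c_{BE}(s) < c_{BE}^{\text{loc}}(s)$ even though the local analysis there is perfectly well understood, so the passage from a neighborhood of $\mathcal B$ to the whole space is exactly where the open content of the conjecture lies. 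The paper itself offers only numerical evidence (the two-bubble family $u_\beta$ in Section \ref{section conjecture}) for this global part.

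Second, even as a proof of the local statement your argument has a gap at the decisive step. After the cubic vanishing $\int U^{p-3} r_\star^3 = 0$ (which is correct, and is the same observation as $\int_{\mathbb S^1} \sin^3(2\theta)\,d\theta = 0$ in the paper), you declare the quartic terms ``$O(\|r\|_{\dot H^s}^4)$, hence subleading next to $c_{BE}^{\text{loc}}\|r\|^2$.'' But on the dangerous eigenspace the quadratic part of the numerator equals \emph{exactly} $c_{BE}^{\text{loc}}(s)$ times the denominator, with zero surplus; the cubic term vanishes; so whether $\mathcal Q \geq c_{BE}^{\text{loc}}(s)$ or $< c_{BE}^{\text{loc}}(s)$ near $\mathcal B$ is decided entirely by the \emph{sign} of the order-$\mu^4$ correction, which competes at the same order against a negative contribution produced by the coupling $\mu \int r_\star^2\, r_\perp$ to the degree-$4$ mode of $r_\perp$ (this must be handled by completing a square, not by absorption into the spectral gap, since it is linear in that mode). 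The paper's computation reduces the net coefficient to $\frac{p-2}{16}\bigl(p+1-\frac{p-2}{\alpha(4)/\alpha(2)-1}\bigr) = \frac{p-2}{16}\cdot\frac{1+2s}{12} > 0$, and this marginal positivity is the entire ``exceptional property'' of $d=1$; your proposal treats it as a harmless remainder and therefore never actually establishes the inequality.
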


We provide some more evidence for Conjecture \ref{conjecture BE} in Section \ref{section conjecture}.

Let us stress here only that also the last part of the conjecture about non-existence of a minimizer would be in contrast to the situation in higher dimensions. Indeed, for $d \geq 2$, the recent result from \cite{Koenig-min} shows that for any $s \in (0, \frac{d}{2})$, every minimizing sequence for $c_{BE}(s)$ converges (up to conformal symmetries and taking subsequences) to some minimizer $f \in \dot{H}^s(\R^d) \setminus \mathcal B$. A key ingredient in the proof in \cite{Koenig-min} is the strict inequality $c_{BE}(s) < c_{BE}^{\text{loc}}(s)$, which fails in $d= 1$ if the first part of Conjecture \ref{conjecture BE} is true.

The special role of dimension $d = 1$ for the Bianchi-Egnell inequality apparent from comparing Theorem \ref{theorem d=1} with the results of \cite{Koenig} is somewhat reminiscent of the situation in \cite[Theorem 2]{FrKoTa}. There, similarly to \cite{Koenig}, for a family of so-called reverse Sobolev inequalities of order $s > d/2$ a certain behavior of the inequality can be verified for $d \geq 2$ through an appropriate choice of test functions, but the same test functions do not yield the result if $d = 1$. To our knowledge, complementing \cite[Theorem 2]{FrKoTa} for $d = 1$ is still an open question. We are currently not in a position to convincingly explain the origin of the particular behavior of $d=1$ in either of the settings studied in this paper and in \cite{FrKoTa}. It would therefore be very interesting to shed some further light on the role of $d = 1$ in conformally invariant minimization problems of fractional order.

The stability of Sobolev's and related functional inequalities and the fine properties of the minimization problem \eqref{bianchi egnell Rd} and its analogues is currently a very active topic of research with many recent contributions. Without attempting to be exhaustive, we mention in particular that the methods and results from \cite{Koenig} and \cite{Koenig-min} have been recently extended to the Log-Sobolev inequality \cite{ChLuTa} and to the Caffarelli--Kohn--Nirenberg inequality in the preprints \cite{WeWu} and \cite{DeTi}; see also \cite{FrPe}. Besides, an excellent introduction to the topic is provided by the recent lecture notes \cite{Frank2023b}.

\section{The Bianchi-Egnell inequality on $\mathbb S^d$. } 
\label{section sphere}

The inequalities \eqref{sobolev Rd} and \eqref{bianchi egnell Rd} have a conformally equivalent formulation on the $d$-dimensional sphere $\mathbb S^d$ (viewed as a subset of $\R^{d+1}$), which is in some sense even more natural than that on $\R^d$ and, at any rate, more convenient for the arguments used in this paper. 

The conformal map between $\R^d$ and $\mathbb S^d$ which induces this equivalence is the (inverse) \emph{stereographic projection} $\mathcal S: \R^d \to \mathbb S^d$ given by 
\begin{equation}
\label{ster proj definition}
(\mathcal S(x))_i = \frac{2 x_i}{1 + |x|^2} \quad (i = 1,...,d), \qquad (\mathcal S(x))_{d+1} = \frac{1 - |x|^2}{1+|x|^2}. 
\end{equation} 
We denote by $J_{\mathcal S}(x)= |\det D \mathcal S(x)| = \left(\frac{2}{1 + |x|^2}\right)^d$ its Jacobian determinant. If $f \in \dot{H}^s(\R^d)$ and $u \in H^s(\mathbb S^d)$ are related by 
\begin{equation}
\label{conf trafo}
f(x) = u_{\mathcal S}(x) := u(\mathcal S(x)) J_\mathcal S(x)^{1/p}, 
\end{equation} 
then the Sobolev inequality \eqref{sobolev Rd} translates to 
\begin{equation}
\label{sobolev}
(u, P_s u) \geq S_{d,s} \|u\|_{L^p(\mathbb S^d)}^2 \qquad \text{ for } u \in H^s(\sph^d), 
\end{equation} 
where $(\cdot, \cdot)$ is $L^2(\sph^d)$ scalar product. The operator $P_s$ appearing here is given on spherical harmonics $Y_\ell$ of degree $\ell \geq 0$ by 
\[ P_s Y_\ell = \alpha(\ell) Y_\ell \]
with 
\[ \alpha(\ell) = \frac{\Gamma(\ell + \frac{d}{2} + s)}{\Gamma(\ell + \frac{d}{2} - s)}. \]

The manifold of optimizers of \eqref{sobolev} (i.e. the image of the bubble functions $\mathcal B$ under the transformation \eqref{conf trafo})  is given by 
\begin{equation}
\label{M definition}
\mathcal M = \left\{ \omega \mapsto c (1 + \zeta \cdot \omega)^{-\frac{d-2s}{2}} \, : \, c \in \R \setminus \{0\}, \, \zeta \in \R^{d+1}, \, |\zeta| < 1 \right\} .   
\end{equation}

We refer to \cite{Frank2023b} for a justification of these facts. 

Finally, the Bianchi-Egnell quotient on $\mathbb S^d$ reads
\[ \mathcal E(u) := \frac{(u, P_s u) - S_{d,s} \|u\|_{L^p(\mathbb S^d)}^2 }{\inf_{h \in \mathcal M} (u - h, P_s (u-h))}, 
\]
and the stability inequality corresponding to \eqref{bianchi egnell Rd} is
\begin{equation}
\label{bianchi egnell}
\mathcal E(u) \geq c_{BE}(s) \qquad \text{ for all } u \in H^s(\sph^d). 
\end{equation}
(Notice carefully that the constants $S_{d,s}$ and $c_{BE}(s)$ do not change as one passes from $\R^d$ to $\mathbb S^d$.)

\textit{Notation and conventions.  } We always consider $H^s(\sph^d)$ to be equipped with the norm $\|u\| := (u, P_s u)^{1/2}$, which is equivalent to the standard norm on $H^s(\sph^d)$. Here, $(\cdot, \cdot)$ is $L^2(\sph^d)$ scalar product.

We always assume that the numbers $p$, $d$, and $s$ satisfy the relation $s \in (0, \frac{d}{2})$ and $p = \frac{2d}{d-2s}$. 

For any $q \in [1, \infty]$, we denote for short $\|\cdot\|_q := \|\cdot\|_{L^q(\mathbb S^d)}$. Also, for brevity we will often write the integral of a real-valued function $u$ defined on $\mathbb S^d$ as $\int_{\mathbb S^d} u$ instead of $\int_{\mathbb S^d} u(\omega) \, d \omega$. The implied measure $d \omega$ is always taken to be non-normalized standard surface measure, so that $\int_{\mathbb S^d} 1 \, d \omega = |\mathbb S^d|$. 

For $\ell \geq 0$, we denote by $E_\ell$ (resp. $E_{\geq \ell}$ or $E_{\leq \ell}$) the space of spherical harmonics of $\mathbb S^d$ of degree equal to (resp. at least or at most) $\ell$. 

\section{Proof of Theorem \ref{theorem d=1}}

In this section, our goal is to prove the following. 

\begin{theorem}
\label{theorem d=1 Sd}
Let $d = 1$ and $s \in (0,\frac{1}{2})$. Then there is a neighborhood $V \subset H^s(\sph^1)$ of $\mathcal M$ such that for every $u \in V \setminus \mathcal M$ one has 
\[ \mathcal E(u) \geq c_{BE}^{\text{loc}}(s). \]
\end{theorem}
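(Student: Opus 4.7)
The plan is to perform a fine Taylor expansion of $\mathcal{E}(u)$ near the bubble manifold $\mathcal{M}$, going one order beyond the standard quadratic analysis that yields $c_{BE}^{\text{loc}}(s)$, and to show that the resulting effective quartic form in the principal mode is nonnegative when $d = 1$, a phenomenon that fails for $d \geq 2$.

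First I would reduce to a standard bubble. By conformal invariance of $\mathcal{E}$ and by choosing the nearest bubble $h_* \in \mathcal{M}$ to $u$, one may write $u = h_0 + r$ where $h_0 \equiv c_0$ is the constant bubble on $\mathbb{S}^1$ and $r$ is orthogonal, in the $(\cdot, P_s \cdot)$ inner product, to the tangent space $T_{h_0}\mathcal{M} = E_0 \oplus E_1$. Decomposing $r = \sum_{\ell \geq 2} r_\ell$ into spherical harmonics, the denominator of $\mathcal{E}(u)$ equals $\sum_{\ell \geq 2} \alpha(\ell)\|r_\ell\|_2^2$, while the quadratic part of the numerator reads $\sum_{\ell \geq 2}(\alpha(\ell) - (p-1)\alpha(0))\|r_\ell\|_2^2$, using that $h_0$ saturates \eqref{sobolev}. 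Minimizing $(\alpha(\ell) - (p-1)\alpha(0))/\alpha(\ell)$ over $\ell \geq 2$ yields $c_{BE}^{\text{loc}}(s) = 4s/(2s+3)$, attained precisely on $E_2$. Thus if $r$ is dominated by its $E_2$-component, the sign of the next-order correction determines the sign of $\mathcal{E}(u) - c_{BE}^{\text{loc}}(s)$.

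Next I would Taylor-expand the numerator to fourth order in $r$. After subtracting $c_{BE}^{\text{loc}}(s)\|r\|^2$, the leading corrections are a cubic contribution proportional to $\int_{\mathbb{S}^1} r^3\, d\theta$ and a quartic contribution proportional to $\int_{\mathbb{S}^1} r^4\, d\theta$, together with an $(\int r^2)^2$-type contribution arising from expanding $\|\cdot\|_p^2 = (\int|\cdot|^p)^{2/p}$. A crucial simplification specific to $\mathbb{S}^1$ is the trigonometric identity $\int_{\mathbb{S}^1} r_2^3\, d\theta = 0$ for every $r_2 \in E_2$, since $r_2^3 \in E_2 \oplus E_6$. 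Consequently the leading cubic coupling involving $r_2$ is through the cross term $\int r_2^2\, r_4$, because $r_2^2$ lies in $E_0 \oplus E_4$ and the $E_0$-component is killed by $r_0 = 0$. In the regime where $r$ is close to $E_2$, the decisive competition is therefore between the direct quartic term in $r_2$ and the indirect contribution produced by $r_4$.

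I would then minimize out $r_4$, check that the remaining $r_\ell$ with $\ell \geq 3$, $\ell \neq 4$ contribute only harmless positive terms, and analyze the resulting effective quartic functional, which takes the schematic form
\[
F(r_2) \;=\; A \int_{\mathbb{S}^1} r_2^4\, d\theta \;-\; \frac{B}{\alpha(4) - (p-1)\alpha(0)} \bigl\| \pi_{E_4}(r_2^2) \bigr\|_2^2,
\]
with explicit positive constants $A, B$ depending on $s$. Using the rotational symmetry of $\mathbb{S}^1$ (a conformal symmetry that preserves $\mathcal{E}$) one may further assume $r_2(\theta) = t\cos(2\theta)$, which reduces $F$ to an elementary scalar quantity in $t$ and $s$ computable in closed form from $\cos^2(2\theta) = \tfrac{1}{2}(1 + \cos(4\theta))$. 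The main obstacle is precisely the arithmetic verification that $F(r_2) \geq 0$ for every $s \in (0, 1/2)$: since the parallel computation yields strict negativity on an open set for $d \geq 2$ by \cite{Koenig}, positivity in $d = 1$ must rely on features particular to the circle, most plausibly the low multiplicity of each $E_\ell$ on $\mathbb{S}^1$ (two-dimensional for $\ell \geq 1$) and the fact that $r_2^2$ activates only the single higher mode $E_4$ rather than an array of such modes. The argument is concluded by controlling the remainders uniformly in a small $H^s$-neighborhood of $\mathcal{M}$, using that the fourth-order Taylor remainder of $\|u\|_p^p$ is $o(\|r\|^4)$ as $\|r\| \to 0$ and that the map $u \mapsto h_*$ is sufficiently regular near $\mathcal{M}$ to justify Step~1 uniformly.
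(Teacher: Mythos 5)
Your outline follows essentially the same route as the paper: reduce to the constant bubble with $r\in E_{\geq 2}$, observe that $\int_{\sph^1}r_2^3=0$ for every $r_2\in E_2$ (the feature special to $d=1$), track the cubic coupling through the $E_4$-component of $r_2^2$, eliminate the $E_4$-mode by completing a square, and weigh the resulting negative contribution against the genuinely quartic terms in $r_2$. The paper does exactly this (after first proving, under the assumption $\mathcal E(u_n)\to c_{BE}^{\text{loc}}(s)$, that the $E_{\geq 3}$-part is $o(1)$ relative to the $E_2$-part, which you assume but do not derive). However, there are two genuine gaps.

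First, the entire theorem rests on the arithmetic you defer: after minimizing out the $E_4$-mode one must check that the effective coefficient is positive for all $s\in(0,\frac12)$, and nothing in your setup guarantees this. In the paper the competition reduces to the explicit quantity $p+1-\frac{p-2}{\alpha(4)/\alpha(2)-1}$, which a computation shows equals $\frac{1+2s}{12}>0$; for $d\geq 2$ the analogous quantity is negative because $\int_{\Sph^d}\rho^3\neq 0$ already at cubic order. Identifying where the crux lies is not the same as resolving it, and without this verification the argument proves nothing.

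Second, your remainder estimate ``the fourth-order Taylor remainder of $\|u\|_p^p$ is $o(\|r\|^4)$ as $\|r\|\to 0$'' is not available when $p<4$, i.e.\ for $s\in(0,\frac14)$. The map $t\mapsto|t|^p$ is then not four times differentiable at $0$, and since $H^s(\sph^1)$ embeds into $L^p$ but \emph{not} into $L^\infty$ or $L^4$, the perturbation $r$ need not be pointwise small and $\int_{\sph^1}|r|^4$ is not even controlled by $\|r\|_{H^s}^4$. The paper handles this by splitting $\sph^1$ into the set $\mathcal C_n=\{|\rho_n|>\frac12\}$ (whose measure is $\mathcal O(\mu_n^p)$ by Chebyshev and Sobolev) and its complement, expanding only to second order on $\mathcal C_n$ and to fourth order elsewhere, and bounding all errors by $o(\mu_n^4+\mu_n^2\|\eta_n\|^2)$ using the decomposition $\rho_n=\mu_n(r_n+\eta_n)$ with $r_n$ uniformly bounded. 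Some such argument is indispensable for half of the range of $s$ covered by the theorem.
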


The lower bound $c_{BE}^{\text{loc}}(s)$ in Theorem \ref{theorem d=1 Sd} is sharp. Indeed, for $u_\mu(\theta) := 1 + \mu \sin(2 \theta)$ the computations in the proof below (or their less involved variant in \cite[proof of Theorem 1]{Koenig}) show that $\text{dist}(u_\mu, \mathcal M) \to 0$ and $\mathcal E(u_\mu) \to c_{BE}^{\text{loc}}(s)$ as $\mu \to 0$.

By the equivalence between $\R^d$ and $\mathbb S^d$ explained in Section \ref{section sphere}, Theorem \ref{theorem d=1 Sd} is equivalent to Theorem \ref{theorem d=1} via stereographic projection. For our arguments, it is however much more convenient to work in the setting of the sphere for at least two reasons. Firstly,  Taylor expansions near $\mathcal M$ are simpler because we can choose the constant function $1 \in \mathcal M$ as a basepoint. Secondly, we have the explicit eigenvalues $\alpha(\ell)$ of the operator $P_s$ and their associated eigenfunctions $\sin(\ell \theta)$ and $\cos(\ell \theta)$ at our disposition. 

We now begin with giving the proof of Theorem \ref{theorem d=1 Sd}. Our proof is inspired by \cite{Frank2023}, where in a similar but different situation a certain term in a Taylor expansion vanishes, but the sign of a certain next-order correction term can be recovered. 

Let $(u_n)$ be an arbitrary sequence such that $\text{dist}_{H^s(\sph^1)}(u_n, \mathcal M) \to 0$. 
Theorem \ref{theorem d=1} follows if we can prove that $\mathcal E(u_n) \geq c_{BE}^{\text{loc}}(s)$ for every $n$ large enough. We may thus assume 
\begin{equation}
\label{E(u_n) to cloc}
\mathcal E(u_n) \to c_{BE}^{\text{loc}}(s),
\end{equation}
for otherwise the inequality $\mathcal E(u_n) \geq c_{BE}^{\text{loc}}(s)$ is automatic for large enough $n$.

By \cite[Lemma 3.4]{DeNiKo} (respectively the conformally equivalent statement on $\sph^1$ instead of $\R$), $\text{dist}_{H^s(\sph^1)}(u_n, \mathcal M)$ is achieved by some $h_n \in \mathcal M$.

Up to multiplying $u_n$ by a constant $c_n$ and applying a conformal transformation $T_n$, both of which do not change $\mathcal E(u_n)$, we may assume that $h_n = 1$. 

The fact that $1 \in \mathcal M$ minimizes $\text{dist}_{H^s(\sph^1)}(u_n, \mathcal M)$ implies that $u_n - 1$ is orthogonal (in $H^s(\mathbb S^1)$) to the tangent space $T_1 \mathcal M$. But this tangent space is precisely spanned by the functions $1$, $\omega_1$ and $\omega_2$. Thus we have $\rho_n := u_n - 1 \in E_{\geq 2}$. We can thus summarize our chosen normalization as 
\begin{equation}
\label{u = 1 + rho}
u_n = 1 + \rho_n, \qquad \rho_n \in E_{\geq 2}, \quad \rho_n \to 0 \, \text{ in } \, H^s(\sph^1). 
\end{equation}

The next lemma refines the decomposition \eqref{u = 1 + rho} and gives additional information. 

\begin{lemma}
\label{lemma r + eta}
Let $(u_n)$ satisfy \eqref{E(u_n) to cloc} and \eqref{u = 1 + rho}. Then there are sequences $\theta_n \in [0, 2 \pi)$, $\mu_n > 0$ and $\eta_n \in E_{\geq 3}$ such that 
\[ u_n = 1 + \mu_n(r_n + \eta_n), \]
with $r_n(\theta) = \sin(2 (\theta - \theta_n)) \in E_2$, $\mu_n \to 0$ and $\|\eta_n\| \to 0$. 
\end{lemma}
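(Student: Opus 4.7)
My plan is to combine a quadratic Taylor expansion of $\mathcal E(u_n)$ with the spectral decomposition into Fourier modes on $\sph^1$ to identify $E_2$ as the dominant frequency of $\rho_n$. I decompose $\rho_n = \sum_{\ell \geq 2} \rho_n^{(\ell)}$ with $\rho_n^{(\ell)} \in E_\ell$, so that $\|\rho_n\|^2 = \sum_{\ell \geq 2} \alpha(\ell) \|\rho_n^{(\ell)}\|_2^2$ and $\|\rho_n\|_2^2 = \sum_{\ell \geq 2} \|\rho_n^{(\ell)}\|_2^2$. Using $\int_{\sph^1} \rho_n = 0$, a standard computation yields the expansion
\[ (u_n, P_s u_n) - S_{1,s} \|u_n\|_p^2 = \|\rho_n\|^2 - (p-1)\alpha(0)\|\rho_n\|_2^2 + o(\|\rho_n\|^2). \]
Since $h_n = 1$ minimizes the distance, the denominator of $\mathcal E(u_n)$ is exactly $\|\rho_n\|^2$, so
\[ \mathcal E(u_n) = \frac{\sum_{\ell \geq 2} \bigl(\alpha(\ell) - (p-1)\alpha(0)\bigr) \|\rho_n^{(\ell)}\|_2^2}{\sum_{\ell \geq 2} \alpha(\ell) \|\rho_n^{(\ell)}\|_2^2} + o(1). \]

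A short manipulation of $\alpha(\ell) = \Gamma(\ell+\tfrac12+s)/\Gamma(\ell+\tfrac12-s)$ and $p-1 = (1+2s)/(1-2s)$ yields the key identity $(p-1)\alpha(0) = (1 - c_{BE}^{\text{loc}}(s))\alpha(2)$, which allows one to rewrite the expansion as
\[ \mathcal E(u_n) - c_{BE}^{\text{loc}}(s) = \bigl(1 - c_{BE}^{\text{loc}}(s)\bigr) \frac{\sum_{\ell \geq 3} \bigl(\alpha(\ell) - \alpha(2)\bigr) \|\rho_n^{(\ell)}\|_2^2}{\sum_{\ell \geq 2} \alpha(\ell) \|\rho_n^{(\ell)}\|_2^2} + o(1). \]
Since $\alpha(\ell)$ is strictly increasing, the ratio $(\alpha(\ell) - \alpha(2))/\alpha(\ell)$ is bounded below by $1 - \alpha(2)/\alpha(3) > 0$ uniformly for $\ell \geq 3$, so the right-hand side dominates a positive multiple of $\|\rho_n^{(\geq 3)}\|^2/\|\rho_n\|^2$. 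The hypothesis $\mathcal E(u_n) \to c_{BE}^{\text{loc}}(s)$ therefore forces $\|\rho_n^{(\geq 3)}\|^2 = o(\|\rho_n\|^2)$; in particular $\rho_n^{(2)} \neq 0$ for $n$ large and $\|\rho_n^{(\geq 3)}\| = o(\|\rho_n^{(2)}\|)$.

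To conclude, I use that every nonzero element of $E_2 = \operatorname{span}\{\cos(2\theta), \sin(2\theta)\}$ can be written in the form $\mu \sin(2(\theta-\theta_0))$ with $\mu > 0$ and $\theta_0 \in [0, 2\pi)$ (a sign is absorbed by shifting $\theta_0$ by $\pi/2$). Applying this to $\rho_n^{(2)}$ yields the desired $\mu_n > 0$ and $\theta_n$, and setting $\eta_n := \rho_n^{(\geq 3)}/\mu_n \in E_{\geq 3}$ completes the decomposition. The convergence $\mu_n \to 0$ follows from $\|\rho_n^{(2)}\| \leq \|\rho_n\| \to 0$, while $\|\eta_n\| \to 0$ is precisely the statement $\|\rho_n^{(\geq 3)}\| = o(\|\rho_n^{(2)}\|)$. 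The most delicate step is the justification of the quadratic expansion of $\|u_n\|_p^2$ with error $o(\|\rho_n\|^2)$: since for $s \in (0, \tfrac12)$ the space $H^s(\sph^1)$ does not embed into $L^\infty$, one cannot rely on a uniform pointwise smallness of $\rho_n$. Instead, one invokes the pointwise bound $|(1+x)^p - 1 - px - \tfrac{p(p-1)}{2}x^2| \lesssim |x|^{\min(p,3)}$ valid for all $x \in \R$, together with the Sobolev embedding $H^s(\sph^1) \hookrightarrow L^p(\sph^1)$, to control the remainder by $O(\|\rho_n\|^{\min(p,3)}) = o(\|\rho_n\|^2)$ since $\min(p,3) > 2$.
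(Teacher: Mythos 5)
Your argument is correct and follows essentially the same route as the paper: a second-order expansion of the numerator and denominator of $\mathcal E(u_n)$ combined with the spectral gap $\alpha(3)>\alpha(2)$ forces the $E_{\geq 3}$ component of $\rho_n$ to be of lower order (the paper phrases this through the single ratio $\tau_n = (\tilde{\eta}_n, P_s \tilde{\eta}_n)/(\tilde{r}_n, P_s \tilde{r}_n)$ and a monotonicity argument rather than mode by mode, but the content is identical, as is the final reparametrization of the $E_2$ part as $\mu_n\sin(2(\theta-\theta_n))$). One small correction: when $p>3$ the pointwise bound in your last paragraph must read $\lesssim |x|^{\min(p,3)}+|x|^p$, since the remainder grows like $|x|^p$ for large $|x|$; the extra term is still $o(\|\rho_n\|^2)$ after H\"older and Sobolev, so the conclusion is unaffected.
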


\begin{proof}
By \eqref{u = 1 + rho}, we can write $u_n = 1+ \rho_n =  1 + \tilde{r}_n + \tilde{\eta}_n$ with $\tilde{r}_n \in E_2$ and $\tilde{\eta}_n \in E_{\geq 3}$. Consequently,  
\[ (u_n, P_s u_n) = (1, P_s 1) + (\rho_n, P_s \rho_n) =  (1, P_s 1) + (\tilde{r}_n, P_s \tilde{r}_n) + (\tilde{\eta}_n, P_s \tilde{\eta}_n)  \]
and 
\[ \int_{\sph^1} |u_n|^p = \int_{\sph^1} 1^p + \frac{p(p-1)}{2} \int_{\sph^1} \rho_n^2 + \mathcal O\left(\int_{\sph^1} |\rho_n|^{\min\{3, p\}} + |\rho_n|^p \right).   \]
Using the second-order Taylor expansion $(a + h)^\frac{2}{p} = a^\frac{2}{p} + \frac{2}{p} a^{\frac{2}{p}-1} h  - \frac{p-2}{p^2} a^{\frac{2}{p} - 2} h^2 + o(h^2)$, we obtain
\[ \|u_n\|_p^2 = (2 \pi)^\frac{2}{p} + (p-1) (2\pi)^{\frac{2}{p}-1} \int_{\sph^1} \rho_n^2 + o(\|\rho_n\|_2^2). \]
(To produce this form of the error term, we use Hölder's and Sobolev's inequalities together with the facts that $\|\rho_n\| \to 0$ and $2 < \min\{3,p\}$.) 

By \cite[Lemma 12]{Frank2023b}, $\dist_{H^s(\mathbb S^1)}(u_n, \mathcal M)^2 = (\rho_n, P_s \rho_n)$ for $n$ large enough. Using that $S_{1,s}  (p-1) (2\pi)^{\frac{2}{p}-1}  = \alpha(1)$, together with assumption \eqref{E(u_n) to cloc}, we obtain
\begin{align*}
c_{BE}^{\text{loc}}(s) + o(1) &\geq \mathcal E(u_n) = \frac{(\rho_n, P_s \rho_n) - \alpha(1) \int_{\sph^1} \rho_n^2 + o(\|\rho_n\|_2^2)}{(\rho_n, P_s \rho_n)} \\
&=   \frac{(\tilde{r}_n, P_s \tilde{r}_n) - \alpha(1) \int_{\sph^1} \tilde{r}_n^2 + (\tilde{\eta}_n, P_s \tilde{\eta}_n) - \alpha(1) \int_{\sph^1} \tilde{\eta}_n^2}{(\tilde{r}_n, P_s \tilde{r}_n) + (\tilde{\eta}_n, P_s \tilde{\eta}_n)} + o(1).
\end{align*}
Now divide both the numerator and the denominator by $(\tilde{r}_n, P_s \tilde{r}_n)$ and abbreviate $\tau_n := \frac{(\tilde{\eta}_n, P_s \tilde{\eta}_n)}{(\tilde{r}_n, P_s \tilde{r}_n)}$. Since $\tilde{r}_n  \in E_2$, we have 
\[ 1 - \alpha(1) \frac{\int_{\sph^1} \tilde{r}_n^2 }{(\tilde{r}_n, P_s \tilde{r}_n)} = 1 - \frac{\alpha(1)}{\alpha(2)} = c_{BE}^{\text{loc}}(s). \]
Moreover, since $\tilde{\eta}_n \in E_{\geq 3}$, we have $\int_{\sph^1} \tilde{\eta}_n^2 \leq \frac{1}{\alpha(3)} (\tilde{\eta}_n, P_s \tilde{\eta}_n)$. Altogether, the above yields 
\begin{equation}
\label{tau n ineq}
c_{BE}^{\text{loc}}(s) + o(1) \geq \frac{c_{BE}^{\text{loc}}(s) + (1 - \frac{\alpha(1)}{\alpha(3)}) \tau_n}{1 + \tau_n} + o(1). 
\end{equation} 
Since $1 - \frac{\alpha(1)}{\alpha(3)} > 1 - \frac{\alpha(1)}{\alpha(2)} = c_{BE}^{\text{loc}}(s)$, the function $\tau \mapsto \frac{c_{BE}^{\text{loc}}(s) + (1 - \frac{\alpha(1)}{\alpha(3)}) \tau}{1 + \tau}$ is strictly increasing in $\tau \in (0, \infty)$ and takes the value $c_{BE}^{\text{loc}}(s)$ in $\tau = 0$. Together with \eqref{tau n ineq}, this forces $\tau_n = o(1)$. 

Now setting $r_n = \mu_n^{-1} \tilde{r}_n$ and $\eta_n = \mu_n^{-1} \tilde{\eta}_n$ with $\mu_n = \left( \frac{\int_{\sph^1} \tilde{r}_n^2}{\pi}\right)^\frac{1}{2}$ gives the conclusion, noting that $r_n \in E_2$ can be written as $r_n = \sin(2 (\theta- \theta_n))$ for some appropriate $\theta_n \in [0, 2\pi)$. 
\end{proof}

Let us now make the additional assumption that 
\begin{equation}
\label{p geq 4}
p = \frac{2d}{d-2s} \geq 4, 
\end{equation}
which is the case if $s \in [\frac{1}{4}, \frac{1}{2})$. 

Assuming \eqref{p geq 4}, we can prove our main result in a relatively straightforward way. We will explain afterwards how the proof needs to be modified in order to cover the general case.  

\begin{proof}
[Proof of Theorem \ref{theorem d=1 Sd}; easy case $p \geq 4$]

Under the additional assumption \eqref{p geq 4} we are now ready to perform the final expansion needed for the proof of Theorem \ref{theorem d=1}. Thanks to $p \geq 4$, the function $t \mapsto |t|^{p}$ is four times continuously differentiable on $\R$, and thus we may expand, for every value of $\rho_n$, 
\begin{align*}
|1 + \rho_n|^p &= 1 + p \rho_n + \frac{p(p-1)}{2} \rho_n^2 + \frac{p(p-1)(p-2)}{6} \rho_n^3 + \frac{p(p-1)(p-2)(p-3)}{24} \rho_n^4 \\
& \qquad + \mathcal O(|\rho_n|^{\min\{5, p\} }+ |\rho_n|^p) . 
\end{align*}
Let us write $\rho_n = \mu_n (r_n + \eta_n)$ as in Lemma \ref{lemma r + eta}. Since $\int_{\sph^1} \rho_n = 0$, this implies 
\begin{align}
\int_{\sph^1} | 1 + \rho_n|^p  &= \int_{\sph^1} 1^p + \frac{p(p-1)}{2} \mu_n^2 \left( \int_{\sph^1} r_n^2 +  \int_{\sph^1} \eta_n^2 \right) + \frac{p(p-1)(p-2)}{6} \mu_n^3  \left(  \int_{\sph^1} (r_n + \eta_n)^3 \right)  \nonumber \\
&\qquad + \frac{p(p-1)(p-2)(p-3)}{24}  \mu_n^4 \int_{\sph^1} r_n^4 + o(\mu_n^4 ).  \label{proof basic expansion}
\end{align}
Here, the error term comes from estimating, using Lemma \ref{lemma r + eta},
\[ \int_{\sph^1} |\rho_n|^p \lesssim \mu_n^p \int_{\sph^1} r_n^p + \mu_n^p \int_{\sph^1} \eta_n^p \lesssim \mu_n^p = o(\mu_n^4), \]
because $p > 4$ (if $p=4$, the expansion \eqref{proof basic expansion} is exact), and 
\[ \mu_n^4 \int_{\sph^1} r_n^3 \eta_n \lesssim \mu_n^4 \|\eta_n\| = o(\mu_n^4), \]
and alike for $\mu_n^4 \int_{\sph^1} r_n^2 \eta_n^2$,  $\mu_n^4 \int_{\sph^1} r_n \eta_n^3$ and $\mu_n^4 \int_{\sph^1} \eta_n^4$. 
 Similarly, if additionally $p > 5$, 
\[  \int_{\sph^1} |\rho_n|^5 \lesssim \mu_n^5 \int_{\sph^1} r_n^5 + \mu_n^5 \int_{\sph^1} \eta_n^5 = o(\mu_n^4) \]
because $\int_{\sph^1} r_n^5 \lesssim \|r_n\|^5 = \mathcal O(1)$ and  $\int_{\sph^1} \eta_n^5 \lesssim \|\eta_n\|^5 = o(1)$ by Hölder's and Sobolev's inequalities. Thus \eqref{proof basic expansion} is proved.

Next, we note that from $r_n(\theta) = \sin (2 (\theta - \theta_n))$ we necessarily obtain $\int_{\sph^1} r_n^3 = 0$. We emphasize that this property is what makes dimension $d = 1$ special because for $d \geq 2$ there are spherical harmonics $\rho \in E_2$ such that $\int_{\mathbb S^d} \rho^3 \neq 0$, see \cite{Koenig} or Proposition \ref{proposition local two-bubbles} below. 

Again by the second-order Taylor expansion $(a + h)^\frac{2}{p} = a^\frac{2}{p} + \frac{2}{p} a^{\frac{2}{p}-1} h  - \frac{p-2}{p^2} a^{\frac{2}{p} - 2} h^2 + o(h^2)$, we obtain from \eqref{proof basic expansion} that
\begin{align*}
\left(  \int_{\sph^1} |1 + \rho_n|^p \right)^\frac{2}{p} &= (2 \pi)^\frac{2}{p} + (p-1) (2 \pi)^{\frac{2}{p} - 1} \Big(   \mu_n^2 \int_{\sph^1} r_n^2 + \mu_n^2 \int_{\sph^1} \eta_n ^2 \\
& \qquad + \frac{p-2}{3} \mu_n^3 \int_{\sph^1} (3 r_n^2 \eta_n + 3 r_n \eta_n^2 + \eta_n^3) + \frac{(p-2)(p-3)}{12} \mu_n^4   \int_{\sph^1} r_n^4 \Big) \\
& \qquad - \frac{(p-1)^2 (p-2)}{4} (2 \pi)^{\frac{2}{p}-2} \mu_n^4 \left(  \int_{\sph^1} r_n^2 \right)^2 + o(\mu_n^4) \\
&= (2 \pi)^\frac{2}{p} + (p-1) (2 \pi)^{\frac{2}{p} - 1} \mu_n^2 \left( \int_{\sph^1} r_n^2 + \int_{\sph^1}  \eta_n ^2 + (p-2) \mu_n \int_{\sph^1} r_n^2 \eta_n \right) \\
& \qquad + \frac{(p-1)(p-2)}{12} (2 \pi)^{\frac{2}{p} - 1} \mu_n^4 \left( (p-3)   \int_{\sph^1} r_n^4 - \frac{3(p-1)}{2 \pi} \left(  \int_{\sph^1} r_n^2 \right)^2   \right) \\
& \qquad  + o(\mu_n^4 + \mu_n^2 \|\eta_n\|^2). 
\end{align*}
The other terms appearing in $\mathcal E(u_n)$ can be expanded as 
\[ (u_n, P_s u_n) = (1, P_s 1) + \mu_n^2 (r_n, P_s r_n) + \mu_n^2 (\eta_n, P_s \eta_n) \]
and (for $n$ large enough)
\[ \dist(u_n, \mathcal M)^{-2}= (\rho_n, P_s \rho_n)^{-1} = \frac{1}{\mu_n^2 (r_n, P_s r_n)} \left( 1 - \frac{(\eta_n, P_s \eta_n)}{(r_n, P_s r_n)} + o(\|\eta_n\|^2) \right). \] 
We can now put all of these expansions together to expand $\mathcal E(u_n)$. Noticing that $S_{1,s} (p-1) (2 \pi)^{\frac{2}{p} - 1} = \alpha(1)$, we arrive at
\begin{align*}
\mathcal E(u_n) &= \frac{(u_n, P_s u_n) - S_{1,s} \|u_n\|_p^2}{\dist(u_n, \mathcal M)^2} \\
&= \frac{(r_n, P_s r_n) - \alpha(1) \int_{\sph^1} r_n^2}{(r_n, P_s r_n)} \left(1 -  \frac{(\eta_n, P_s \eta_n)}{(r_n, P_s r_n)}\right) \\
&\qquad + \frac{(\eta_n, P_s \eta_n) - \alpha(1) \left(\int_{\sph^1} \eta_n^2 - (p-2) \mu_n \int_{\sph^1} r_n^2 \eta_n \right) }{(r_n, P_s r_n)} + o(\|\eta_n\|^2) \\
& \qquad + \frac{\alpha(1)}{(r_n, P_s r_n)} \frac{p-2}{12} \mu_n^2  \left( \frac{3(p-1)}{2 \pi} \left(  \int_{\sph^1} r_n^2 \right)^2   - (p-3)   \int_{\sph^1} r_n^4   \right) + o(\mu_n^2). 
\end{align*}
Since $r_n \in E_2$, we have $(r_n, P_s r_n) = \alpha(2) \int_{\sph^1} r_n^2$ and consequently 
\[ \frac{(r_n, P_s r_n) - \alpha(1) \int_{\sph^1} r_n^2}{(r_n, P_s r_n)} = 1 - \frac{\alpha(1)}{\alpha(2)} = c_{BE}^{\text{loc}}(s). \]
Therefore we can write the above expansion as 
\begin{align}
& \qquad (r_n, P_s r_n) \left( \mathcal E(u_n) - c_{BE}^{\text{loc}}(s) \right) \nonumber \\
 &= (1 - c_{BE}^{\text{loc}}(s)  + o(1)) (\eta_n, P_s \eta_n) - \alpha(1) \left(\int_{\sph^1} \eta_n^2 - (p-2) \mu_n \int_{\sph^1} r_n^2 \eta_n \right) \nonumber \\
&\quad  + \alpha(1) \frac{p-2}{12} \mu_n^2  \left( \frac{3(p-1)}{2 \pi} \left(  \int_{\sph^1} r_n^2 \right)^2   - (p-3)   \int_{\sph^1} r_n^4   \right) + o(\mu_n^2).  \label{expansion proof intermed}
\end{align}
It remains to find a lower bound which shows that the right side is strictly positive for $n$ large enough. 

We expand $\eta_n$ in spherical harmonics
\[ \eta_n = \sum_{k = 3}^\infty a_{k,n} \sin k\theta + b_{k,n} \cos k \theta. \]
Up to applying an additional rotation, we may assume for simplicity that $\theta_n = 0$ in the decomposition of  Lemma \ref{lemma r + eta}, i.e., $r_n(\theta) = \sin 2 \theta$. Since $\sin^2 2 \theta = \frac{1}{2} - \frac{1}{2} \cos 4 \theta$, we have
\[ \int_{\sph^1} r_n^2 \eta_n = -\frac{\pi}{2} b_{4,n} \]
because all other integrals of $\sin^2 2\theta$ against the $\sin k \theta$ and $\cos k \theta$ with $k \geq 3$ vanish. 
 Following this observation, we may further decompose 
\[ \eta_n = b_{4,n} \cos 4 \theta + \tilde{\eta}_n. \]
Then, recalling $c_{BE}^{\text{loc}}(s) = 1 - \frac{\alpha(1)}{\alpha(2)}$ and $\tilde{\eta}_n \in E_{\geq 3}$, 

\begin{align*}
& \qquad (1 - c_{BE}^{\text{loc}}(s)  + o(1)) (\eta_n, P_s \eta_n) - \alpha(1) \left(\int_{\sph^1} \eta_n^2 - (p-2) \mu_n \int_{\sph^1} r_n^2 \eta_n \right) \\
&= \left( \frac{\alpha(1)}{\alpha(2)} + o(1) \right) (\tilde{\eta}_n, P_s \tilde{\eta}_n) - \alpha(1) \int_{\sph^1} \tilde{\eta}_n^2  +  \alpha(1) \left(\frac{\alpha(4)}{\alpha(2)} - 1 + o(1)\right) b_{4,n}^2 \pi - \alpha(1) (p-2) \mu_n \frac{\pi}{2} b_{4,n} \\
& \geq \alpha(1) \left( \frac{\alpha(3)}{\alpha(2)} - 1 + o(1) \right) \int_{\sph^1} \tilde{\eta}_n^2 \\
& \qquad  +  \alpha(1) \left(\frac{\alpha(4)}{\alpha(2)} - 1 + o(1)\right) \pi \left( b_{4,n}^2 - 2 \frac{p-2}{4} \left(\frac{\alpha(4)}{\alpha(2)} - 1 + o(1)\right)^{-1} \mu_n b_{4,n} \right).   \\
\end{align*}
Now we drop the first summand, which is nonnegative. In the second summand we complete the square in $b_{4,n}$ to obtain
\begin{align*}
& \quad (1 - c_{BE}^{\text{loc}}(s)  + o(1)) (\eta_n, P_s \eta_n) - \alpha(1) \left(\int_{\sph^1} \eta_n^2 - (p-2) \mu_n \int_{\sph^1} r_n^2 \eta_n \right)  \\
&\geq  \alpha(1) \left(\frac{\alpha(4)}{\alpha(2)} - 1 + o(1)\right) \pi \, \times \\
& \qquad \times  \left( \left( b_{4,n} - \frac{p-2}{4} \left(\frac{\alpha(4)}{\alpha(2)} - 1 + o(1)\right)^{-1} \mu_n \right)^2 - \frac{(p-2)^2}{16} \left(\frac{\alpha(4)}{\alpha(2)} - 1 + o(1)\right)^{-2} \mu_n^2   \right)  \\
& \geq - \alpha(1) \left(\frac{\alpha(4)}{\alpha(2)} - 1 + o(1)\right)^{-1} \pi \frac{(p-2)^2}{16} \mu_n^2. 
\end{align*}

Moreover, we can further simplify the term of \eqref{expansion proof intermed} that is quartic in $r_n$ by observing 
\[ \int_{\sph^1} r_n^2  = \pi  \quad \text{ and } \quad  \int_{\sph^1} r_n^4 = \int_0^{2\pi} \sin^4 \theta = \frac{3\pi}{4}. \]
Inserting all of this into \eqref{expansion proof intermed}, we obtain 
\begin{align*}
& \qquad (r_n, P_s r_n) \left( \mathcal E(u_n) - c_{BE}^{\text{loc}}(s) \right) \\
 &\geq \left( - \alpha(1) \left(\frac{\alpha(4)}{\alpha(2)} - 1 + o(1)\right)^{-1} \pi \frac{(p-2)^2}{16} + \alpha(1) \frac{p-2}{12} \left( \frac{3(p-1)}{2\pi} \pi^2 - (p-3) \frac{3\pi}{4} \right) \right) \mu_n^2 \\
&= \alpha(1) \pi \frac{p-2}{16} \left( p + 1 -  \frac{p-2}{\frac{\alpha(4)}{\alpha(2)} - 1 + o(1)} \right) \mu_n^2 .
\end{align*}
Recalling that $p = \frac{2}{1-2s}$ and $\alpha(\ell) = \frac{\Gamma(\ell + \frac{1}{2} + s)}{\Gamma(\ell + \frac{1}{2} - s)}$, an explicit computation gives 
\begin{align*}
 p + 1 -  \frac{p-2}{\frac{\alpha(4)}{\alpha(2)} - 1 + o(1)} &= \frac{3 - 2s}{1 - 2s} - \frac{(5 - 2s)(7 - 2s)}{12(1 - 2s)} + o(1) = \frac{1 - 4s^2}{12(1 - 2s)} + o(1) \\
 &= \frac{1 + 2s}{12}+ o(1)  . 
\end{align*} 
As a consequence, 
\[ \mathcal E(u_n) - c_{BE}^{\text{loc}}(s) \geq  (r_n, P_s r_n)^{-1}  \alpha(1)\pi \frac{p-2}{16} \frac{1 + 2s}{12} \mu_n^2+ o(\mu_n^2) > 0 \]
for every $n$ large enough. This finishes the proof (in the easy case $p \geq 4$). 
\end{proof}

Let us now explain how to drop the assumption \eqref{p geq 4} which states that $p \geq 4$. If $ p < 4$, the very first step in the preceding proof is not justified, namely expanding $|1 + \rho_n|^p$ to fourth order, because $t \mapsto |t|^p$ is not four times continuously differentiable in $0$. 

That means, the fourth order expansion of $|1 + \rho_n(\theta)|^p$ is only justified at points $\theta$ where $\rho_n(\theta) > -1$. For this condition to be fulfilled for all $\theta \in (0,2\pi)$, we would for example need $\rho_n$ (or equivalently $\eta_n$) to converge to zero \emph{uniformly} on $\sph^1$. However, since $H^s(\sph^1)$ does not embed into $L^\infty(\sph^1)$, this is not necessarily the case. To overcome this problem we adapt and simplify a strategy carried out in \cite{Frank2023} in a similar situation for $s = 1$ and $d \geq 1$. 

\begin{proof}
[Proof of Theorem \ref{theorem d=1 Sd}, hard case $p < 4$]
Let again a sequence $u_n = 1 + \rho_n$ be fixed which satisfies \eqref{E(u_n) to cloc} and \eqref{u = 1 + rho}. Notice that Lemma \ref{lemma r + eta} holds for $u_n$ also in this case. 

 In view of the discussion above, we denote  
\begin{equation}
\label{Cn definition}
\mathcal C_n := \left\{ \theta \in S^1 \, : \, |\rho_n(\theta)| > \frac 12 \right\}.
\end{equation}
On $\sph^1 \setminus \mathcal C_n$, we have $\rho_n \in [-\frac{1}{2}, \frac{1}{2}]$ and therefore we can expand, similarly to the above, 
\begin{align}
\int_{\sph^1 \setminus \mathcal C_n} |1 + \rho_n|^p &= \int_{\sph^1 \setminus \mathcal C_n} 1 + p \int_{\sph^1 \setminus \mathcal C_n} \rho_n + \frac{p(p-1)}{2} \int_{\sph^1 \setminus \mathcal C_n} \rho_n^2 + \frac{p(p-1)(p-2)}{6} \int_{\sph^1 \setminus \mathcal C_n} \rho_n^3 \nonumber \\
& \qquad +  \frac{p(p-1)(p-2)(p-3)}{24} \int_{\sph^1 \setminus \mathcal C_n} \rho_n^4 + \mathcal O\left( \int_{\sph^1 \setminus \mathcal C_n} |\rho_n|^5 \right).   \label{int s1 setminus Cn}
\end{align}
On the other hand, since $t \mapsto |t|^p$ is  twice differentiable on $\R$, on $\mathcal C_n$ we can still expand to second order, 
\begin{equation}
\label{int Cn}
\int_{\mathcal C_n} |1 + \rho_n|^p = \int_{\mathcal C_n} 1 + p \int_{\mathcal C_n} \rho_n + \frac{p(p-1)}{2} \int_{\mathcal C_n} \rho_n^2 + \mathcal O \left(\int_{\mathcal C_n} |\rho_n|^{\min\{3, p\}} + |\rho_n|^p \right). 
\end{equation} 
Let us now observe that we can bound the measure of $\mathcal C_n$, uniformly in $n \in \N$, by
\begin{equation}
\label{Cn bound}
|\mathcal C_n| \lesssim \mu_n^p. 
\end{equation} 
Indeed, this follows from 
\[ |\mathcal C_n| \left( \frac{1}{2} \right)^p \leq \int_{\sph^1} |\rho_n|^p \lesssim \mu_n^p \int_{\sph^1} |r_n|^p + \mu_n^p \int_{\sph^1} |\eta_n|^p \lesssim |\mu_n|^p \]
by Sobolev's inequality and Lemma \ref{lemma r + eta}. 

Using \eqref{Cn bound} and the fact that $r_n$ is uniformly bounded, we can bound the error terms in \eqref{int Cn} by 
\[ \int_{\mathcal C_n} |\rho_n|^p \lesssim \mu_n^p \int_{\mathcal C_n} |r_n|^p  + \mu_n^p \int_{\mathcal C_n} |\eta_n|^p \lesssim \mu_n^p |\mathcal C_n| + \mu_n^p \|\eta_n\|^p = o(\mu_n^4 + \mu_n^2 \|\eta_n\|^2). \]
Moreover, in the same way we can estimate 
\[  \int_{\mathcal C_n} |\rho_n|^3 \lesssim  \mu_n^{3 + p} + \mu_n^3 \|\eta_n\|^3 = o(\mu_n^4 + \mu_n^2 \|\eta_n\|^2) \]
and 
\[  \int_{\mathcal C_n} |\rho_n|^4 \lesssim \mu_n^{4 + p} + \mu_n^4 \|\eta_n\|^4 = o(\mu_n^4 + \mu_n^2 \|\eta_n\|^2). \]
Finally, the error term in \eqref{int s1 setminus Cn}, using that $\mu_n |\eta_n| \leq |\rho_n| + \mu_n |r_n| \leq 1$, can be bounded by 
\[    \int_{\sph^1 \setminus \mathcal C_n} |\rho_n|^5 \lesssim \mu_n^5 \int_{\sph^1 \setminus \mathcal C_n} |r_n|^5 + \mu_n^5 \int_{\sph^1 \setminus \mathcal C_n} |\eta_n|^5 \lesssim \mu_n^5 + \mu_n^p \int_{\sph^1 \setminus \mathcal C_n} |\eta_n|^p = o(\mu_n^4 + \mu_n^2 \|\eta_n\|^2). \]
With all these error estimates, we have proved that by adding up \eqref{int s1 setminus Cn} and \eqref{int Cn} we obtain
\begin{align*}
\int_{\sph^1} |1 + \rho_n|^p &=  \int_{\sph^1} 1 + p \int_{\sph^1} \rho_n + \frac{p(p-1)}{2} \int_{\sph^1} \rho_n^2 + \frac{p(p-1)(p-2)}{6} \int_{\sph^1} \rho_n^3  \\
& \qquad +  \frac{p(p-1)(p-2)(p-3)}{24} \int_{\sph^1} \rho_n^4 +  o(\mu_n^4 + \mu_n^2 \|\eta_n\|^2).
\end{align*}
Now we can decompose $\rho_n = \mu_n (r_n + \eta_n)$ and use some estimates we have already explained above. In this way we obtain the analogue of \eqref{proof basic expansion}. From there we can proceed with the proof as in the 'easy case' $p \geq 4$. 
\end{proof}

\section{A family of test functions for $\mathcal E(u)$}
\label{section conjecture}

In this section, we study in some detail a family $(u_\beta)$ of natural test functions which interpolates between one bubble and two bubbles, and which we define in Subsection \ref{subsection bubbles properties} below. Here, actually most of our analysis will be carried out for general dimension $d \geq 1$. 

Our analysis of $(u_\beta)$ leads to several interesting consequences. Firstly, for $d \geq 2$ the $(u_\beta)$ yield a different and very natural choice of test function that gives the strict inequality $c_{BE}(s) < c_{BE}^{\text{loc}}(s)$ originally proved in \cite{Koenig}. See Subsection \ref{subsection strict ineq} below.

Secondly, for $d = 1$, through some computations and basic numerics carried out in Subsections \ref{subsection p=3} and \ref{subsection p=4} below we show that $\mathcal E(u_\beta) > c_{BE}^\text{loc}(s)$ for all values of $\beta$. Since the $u_\beta$ can be expected to be very good competitors for the global infimum $c_{BE}(s)$ (see the discussion in Subsection \ref{subsection bubbles properties}), this provides some more evidence which supports our Conjecture \ref{conjecture BE}.

\subsection{Sums of two bubbles and their properties}
\label{subsection bubbles properties}

For any $d \geq 1$, $s \in (0, d/2)$ and $p = \frac{2d}{d-2s} \in (2, \infty)$, let
\begin{equation}
\label{v beta definition}
v_\beta (\omega) := (1 - \beta^2)^\frac{d}{2p} (1 - \beta \omega_{d+1})^{-\frac{d}{p}}, \qquad \beta \in (-1, 1). 
\end{equation}
The function $v_\beta$ is an optimizer of the Sobolev inequality \eqref{sobolev}, i.e., $v_\beta \in \mathcal M$. Its normalization is chosen such that 
\begin{equation}
\label{v beta normalization}
\int_{\mathbb S^d} v_\beta^p \, d \omega = \int_{\mathbb S^d} 1 = |\mathbb S^d| \quad \text{ and } \quad (v_\beta, P_s v_\beta) = (1, P_s 1) = \alpha(0) |\mathbb S^d|. 
\end{equation}
Under the stereographic projection defined in \eqref{ster proj definition} and \eqref{conf trafo}, the family $(v_\beta)_{\beta \in (-1, 1)}$ corresponds precisely to all dilations $B_\lambda(x) = \lambda^\frac{d}{p} B(\lambda x)$, $\lambda > 0$, of the standard bubble $B(x) = \left(\frac{2}{1+ |x|^2}\right)^\frac{d}{p}$ centered in $0 \in \R^d$. 

Let us now consider the family made of sums of two bubbles $v_\beta$ given by
\begin{equation}
\label{u beta definition}
u_\beta := v_\beta + v_{-\beta} 
\end{equation} 
and note that by symmetry it is sufficient to consider the range $\beta \in (0, 1)$. In the equivalent setting of $\R^d$, the family $u_\beta$ interpolates between one bubble centered at the origin having twice the $L^p$-norm of $B$ (for $\beta = 0$) and the superposition $B_\lambda + B_{\lambda^{-1}}$ of two weakly interacting bubbles centered in $0$, with $\lambda = \lambda(\beta) = \sqrt{\frac{1+\beta}{1 - \beta}} \to \infty$ as $\beta \to 1$. 

Let us discuss in some more detail the reasons why, heuristically, we expect the $(u_\beta)$ to be good competitors for the global infimum $c_{BE}(s)$, i.e.  $\inf_{\beta \in (0,1)} \mathcal E(u_\beta)$ to give a value close to $c_{BE}(s)$. 

Firstly, it is shown in \cite{Koenig-min} that the asymptotic values of $\mathcal E(u_\beta)$ as $\beta \to 1$ is
\begin{equation}
\label{c 2 bubble}
\lim_{\beta \to 1} \mathcal E(u_\beta) = 2 - 2^\frac{d-2s}{d} .
\end{equation}
Moreover, the analysis of minimizing sequences from \cite{Koenig-min} shows that this value is best possible for sequences consisting of at least two non-trivial asymptotically non-interacting parts. 

Secondly, as $\beta \to 0$, the quotient $\mathcal E(u_\beta)$ converges to the best local constant $c_{BE}^{\text{loc}}(s)$. What is more, for every $d \geq 2$, it even does so \emph{from below}. This is the content of Proposition \ref{proposition local two-bubbles} below.  

The third reason why we expect the $u_\beta$ to be good competitors for $c_{BE}(s)$ concerns the whole range $\beta \in (0,1)$, not just the asymptotic regime. Indeed, the shape of $u_\beta$ as $\beta$ varies reflects the two competing required properties of a minimal function $u$ for $\mathcal E$. On the one hand, the numerator of $\mathcal E$, i.e., the Sobolev deficit $(u, P_s u) - \mathcal S_d \|u\|_p^2$ should be small, hence $u$ should look like a Talenti bubble (which is the case for $\beta$ small). On the other hand, the denominator of $\mathcal E$, i.e., the distance $\dist(u, \mathcal M)^2$, should be large, which forces $u$ to be different from a Talenti bubble. The family $(u_\beta)$ represents a natural attempt to reconcile these two competing necessities. 

In connection with this, we can mention an interesting analogy with the stability inequality associated to the isoperimetric inequality, whose features are similar to inequality \eqref{bianchi egnell}. For $d=2$, a conjecture which is strongly supported by both numerics and partial rigorous arguments states that the optimal set for the isoperimetric stability inequality is a certain explicit non-convex mask-shaped set, see \cite{BiCrHe}. This set is precisely a compromise between one and two disjoint balls, in much the same way as $u_\beta$, for intermediate values of $\beta \in (0,1)$ is a (probably not fully optimal) compromise between one and two non-interacting bubbles. 

\subsection{An alternative proof for $c_{BE}(s) < c_{BE}^{\text{loc}}(s)$ when $d \geq 2$}
\label{subsection strict ineq}

The following proposition yields an alternative proof of the main result of the recent paper \cite{Koenig}, namely the fact that $c_{BE}(s) < c_{BE}^\text{loc}(s)$ for every $d \geq 2$. 

\begin{proposition}
\label{proposition local two-bubbles}
Let $d \geq 1$ and let $(u_\beta)_{\beta \in (-1,1)}$ be the family of functions defined in \eqref{u beta definition}. Then there are $c_1(\beta), c_2 > 0$ such that $c_1(\beta) \to 2$ and 
\begin{equation}
\label{u beta expansion prop} 
u_\beta = c_1(\beta) + c_2 \beta^2 \rho + o(\beta^2)
\end{equation}
uniformly on $\mathbb S^d$ as $\beta \to 0$, where 
\begin{equation}
\label{rho definition}
\rho(\omega) = \omega_{d+1}^2 - \frac{1}{d} \sum_{i=1}^d \omega_i^2  \quad \in E_2. 
\end{equation}
As a consequence, 
 $\lim_{\beta \to 0} \mathcal E(u_\beta) = \frac{4s}{d + 2s +2} = c_{BE}^\text{loc}(s)$. 
Moreover, if $d \geq 2$,
\begin{equation}
\label{strict ineq proposition}
\mathcal E(u_\beta) < c_{BE}^\text{loc} (s)
\end{equation} 
for all $\beta$ small enough. 
\end{proposition}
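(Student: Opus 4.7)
The plan is to Taylor expand $v_\beta$ in $\beta$ uniformly on $\mathbb S^d$, exploit the parity cancellations in $u_\beta = v_\beta + v_{-\beta}$ to arrive at \eqref{u beta expansion prop}, and then feed the result into an asymptotic expansion of $\mathcal E$ analogous to the one performed in the proof of Lemma \ref{lemma r + eta}, but pushed to cubic order in the $E_2$-component in order to see the term $\int_{\mathbb S^d}\rho^3$ that distinguishes $d=1$ from $d \geq 2$.

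For the first step I would write $(1-\beta \omega_{d+1})^{-d/p} = 1 + \tfrac{d}{p}\beta \omega_{d+1} + \tfrac{d(d+p)}{2p^2}\beta^2 \omega_{d+1}^2 + O(\beta^3)$ and $(1-\beta^2)^{d/(2p)} = 1 - \tfrac{d}{2p}\beta^2 + O(\beta^4)$, both uniformly in $\omega \in \mathbb S^d$. Multiplying these and then adding $v_\beta + v_{-\beta}$ makes all odd powers of $\beta$ cancel, giving $u_\beta = 2 + \beta^2\bigl(\tfrac{d(d+p)}{p^2}\omega_{d+1}^2 - \tfrac{d}{p}\bigr) + O(\beta^4)$. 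Using $\omega_{d+1}^2 = \tfrac{d}{d+1}\rho + \tfrac{1}{d+1}$ rearranges this into $u_\beta = c_1(\beta) + c_2 \beta^2 \rho + O(\beta^4)$ with $c_1(\beta) \to 2$ and $c_2 = \tfrac{d^2(d+p)}{p^2(d+1)} > 0$; the claim $\rho \in E_2$ is immediate since $\rho$ is a harmonic degree-$2$ polynomial on $\R^{d+1}$ with zero mean on $\mathbb S^d$.

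Next, invariance of $\mathcal E$ under multiplication by constants lets me replace $u_\beta$ by $\tilde u_\beta := u_\beta / c_1(\beta) = 1 + \tilde\mu_\beta \rho + O(\beta^4)$, where $\tilde\mu_\beta := c_2 \beta^2 / c_1(\beta) > 0$. Since $\rho$ is a \emph{fixed} bounded function and $\tilde\mu_\beta \to 0$, the quantity $\tilde\mu_\beta \rho$ converges to $0$ uniformly on $\mathbb S^d$, so $|1 + \tilde\mu_\beta \rho|^p = (1+\tilde\mu_\beta \rho)^p$ can be Taylor expanded to any order (no analog of the hard case $p<4$ of Theorem \ref{theorem d=1 Sd} is needed). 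Expanding the numerator and denominator of $\mathcal E(\tilde u_\beta)$ as in the proof of Lemma \ref{lemma r + eta} and using the identities $\alpha(0)(p-1) = \alpha(1)$ and $S_{d,s}|\mathbb S^d|^{2/p} = \alpha(0)|\mathbb S^d|$, I expect
\begin{equation*}
\mathcal E(u_\beta) \;=\; c_{BE}^{\text{loc}}(s) \;-\; \frac{(p-2)\,\alpha(1)}{3\,\alpha(2)\,\|\rho\|_2^2}\; \tilde\mu_\beta \int_{\mathbb S^d} \rho^3 \;+\; O(\tilde\mu_\beta^2),
\end{equation*}
which in particular yields $\mathcal E(u_\beta) \to c_{BE}^{\text{loc}}(s)$ as $\beta \to 0$ in every dimension.

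The remaining and, I expect, most delicate step is to verify that $\int_{\mathbb S^d}\rho^3 > 0$ whenever $d \geq 2$, because combined with $\tilde\mu_\beta > 0$ and $p>2$ this forces the correction term above to be strictly negative and hence gives \eqref{strict ineq proposition}. Rewriting
\[
\int_{\mathbb S^d} \rho^3 \;=\; |\mathbb S^{d-1}| \int_{-1}^1 \Bigl( \tfrac{d+1}{d} t^2 - \tfrac{1}{d}\Bigr)^3 (1-t^2)^{(d-2)/2}\, dt
\]
and substituting $s=t^2$ identifies this integral, up to a positive factor, with the third central moment of a $\mathrm{Beta}(\tfrac12, \tfrac{d}{2})$ distribution on $[0,1]$ about its mean $\tfrac{1}{d+1}$. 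This distribution is symmetric exactly when its two parameters coincide (that is, $d=1$) and is otherwise right-skewed for $d\geq 2$, so the integral vanishes precisely when $d=1$ (consistent with the remark in the proof of Theorem \ref{theorem d=1 Sd} that this vanishing is what makes dimension one exceptional) and is strictly positive for $d \geq 2$, completing the proof.
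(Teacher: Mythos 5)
Your argument is correct and follows the same route as the paper's proof for its first two parts: the identical Taylor expansion of $v_\beta$, the parity cancellation in $u_\beta = v_\beta + v_{-\beta}$, and the identity $\omega_{d+1}^2 = \tfrac{d}{d+1}\rho + \tfrac{1}{d+1}$; your explicit coefficient $c_2 = \tfrac{d^2(d+p)}{p^2(d+1)}$ agrees with the paper's (unstated) one. Two remarks. First, for the expansion of the quotient the paper simply cites the computations of \cite{Koenig} to obtain $\mathcal E(u_\beta) = c_{BE}^{\text{loc}}(s) - c_3\beta^2\int_{\mathbb S^d}\rho^3 + o(\beta^2)$, whereas you redo it; your coefficient $\tfrac{(p-2)\alpha(1)}{3\alpha(2)\|\rho\|_2^2}$ is the correct one, but note that extracting the order-$\tilde\mu_\beta$ correction to the quotient requires controlling numerator and denominator to order $o(\beta^6)$, while \eqref{u beta expansion prop} only pins down the perturbation up to its $\beta^4$-term. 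This is harmless — $u_\beta$ is even and analytic in $\beta$ and its $\beta^4$-coefficient is zonal, so its $E_2$-part merely reparametrizes $\tilde\mu_\beta$ and its $E_0$- and $E_4$-parts contribute only at higher order — but that bookkeeping deserves a sentence rather than a bare $O(\tilde\mu_\beta^2)$. Second, your treatment of the sign of $\int_{\mathbb S^d}\rho^3$ is genuinely different: the paper reduces the integral by repeated integration by parts to the closed form $|\mathbb S^{d-1}|\tfrac{8(d^2-1)}{d^3(d+2)(d+4)}\int_0^\pi \sin^{d+5}\theta\,d\theta$, which is self-contained, while you identify it (after $s=t^2$) with the third central moment of a $\mathrm{Beta}(\tfrac12,\tfrac d2)$ law and invoke the fact that its skewness has the sign of $\tfrac{d}{2}-\tfrac12$. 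The two computations agree, and yours is shorter and more conceptual, but it rests on a quoted probabilistic formula that you should either cite or verify; as stated, "symmetric iff the parameters coincide, otherwise right-skewed" is an assertion, not a proof.
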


As we will see in the proof of Proposition \ref{proposition local two-bubbles}, the proof of the strict inequality \eqref{strict ineq proposition} boils down, via a Taylor expansion, to proving that $\int_{\mathbb S^d} \rho^3 > 0$ if $d \geq 2$ (while $\int_{\mathbb S^1} \rho^3  =0$ if $d = 1$). Exhibiting a second spherical harmonic $\rho \in E_2$ with this property has been the key observation of \cite{Koenig}, however the spherical harmonic chosen there is different from $\rho$ in \eqref{rho definition}.  Arguably, the choice of $\rho$ in \eqref{rho definition} is more natural, because it comes from the family $(u_\beta)$ via \eqref{u beta expansion prop}, while the choice in \cite{Koenig} is made on abstract and purely algebraic grounds. It must be noted that both choices require $d \geq 2$ for \eqref{strict ineq proposition} to hold, which is consistent with Theorem \ref{theorem d=1}. 

\begin{proof}
[Proof of Proposition \ref{proposition local two-bubbles}]
The proof of \eqref{u beta expansion prop} comes from a straightforward Taylor expansion. Indeed, 
\begin{align*}
v_\beta(\omega) &= (1- \beta^2)^\frac{d}{2p} (1 - \beta \omega_{d+1})^{-\frac{d}{p}} \\
& = \left(1 - \frac{d}{2p} \beta^2 + o(\beta^2)\right) \left( 1 + \frac{d}{p}\beta \omega_{d+1} + \frac{1}{2} \frac{d}{p}\left(\frac{d}{p} +1 \right) \beta^2 \omega_{d+1}^2 + o(\beta^2) \right) \\
&=\left(1 - \frac{d}{2p} \beta^2 \right) + \frac{d}{p} \beta \omega_{d+1}  + \frac{1}{2} \frac{d}{p} \left(\frac{d}{p} + 1 \right) \beta^2 \omega_{d+1}^2 + o(\beta^2). 
\end{align*}
Hence 
\[ u_\beta = v_\beta + v_{-\beta} = \left(2 - \frac{d}{p} \beta^2 \right) + \frac{d}{p} \left(\frac{d}{p} + 1 \right) \beta^2 \omega_{d+1}^2 + o(\beta^2). \]
From this we conclude \eqref{u beta expansion prop} by observing that 
\[ \omega_{d+1}^2 = \frac{d}{d+1} \rho + \frac{1}{d+1}, \]
with $\rho$ defined by \eqref{rho definition}. 

Now we turn to proving \eqref{strict ineq proposition}. Given \eqref{u beta expansion prop} and the fact that $\rho \in E_2$, it now follows from a Taylor expansion of the quotient $\mathcal E(u_\beta)$ that 
\[ \mathcal E(u_\beta) = \mathcal E\left(1 + \frac{c_2}{2} \beta^2 \rho \right) + o(\beta^2) = c_{BE}^\text{loc}(s) - c_3 \beta^2 \int_{\mathbb S^d} \rho^3 + o(\beta^2) \]
for some constant $c_3 > 0$, whose explicit value is of no interest to us. This follows from the computations made in \cite{Koenig}, respectively from their equivalent on $\mathbb S^d$ via stereographic projection. 

To prove \eqref{strict ineq proposition}, it therefore only remains to show that $\int_{\mathbb S^d}\rho^3 > 0$, where $\rho$ is given by \eqref{rho definition}. We compute 
\begin{align*}
\int_{\mathbb S^d}\rho^3 &= |\mathbb S^{d-1}| \int_0^{\pi} \left( \cos^2 \theta - \frac{1}{d} \sin^2 \theta \right)^3 \sin^{d-1} \theta \, d \theta \\
&= |\mathbb S^{d-1}| \int_0^{\pi} \left( \cos^6 \theta - \frac{3}{d} \cos^4 \theta \sin^2 \theta + \frac{3}{d^2} \cos^2 \theta \sin^4 \theta - \frac{1}{d^3} \sin^6 \theta \right) \sin^{d-1} \theta \, d \theta. 
\end{align*}
Integration by parts yields the relation
\[ \int_0^\pi \cos^k \theta \sin^l \theta \, d \theta = \frac{k-1}{l+1} \int_0^\pi \cos^{k-2} \theta \sin^{l+2} \theta \, d \theta \]
for all $k \geq 2$, $l \geq 0$. Applying this repeatedly, a straightforward calculation leads to 
\[ \int_{\mathbb S^d}\rho^3 = |\mathbb S^{d-1}| \frac{8 (d^2 - 1)}{d^3 (d +2)(d+4)} \int_0^\pi \sin^{d+5} \theta  \, d \theta. \]
Thus for $\rho$ given by \eqref{rho definition}, $\int_{\mathbb S^d} \rho^3$ is strictly positive whenever $d \geq 2$ (and zero when $d = 1$). Hence the proof is complete. 
\end{proof}

\subsection{The case $p = 3$}
\label{subsection p=3}

We now turn to evaluating the family $(u_\beta)$ for intermediate values of $\beta$. This proves to be much harder than obtaining asymptotic values.  One of the reasons for this is the fact that there is some $\beta_0 \in (0,1)$ such that for $\beta > \beta_0$ the distance $\dist(u_\beta, \mathcal M)$ is no longer achieved by a constant. To simplify this particular issue, we introduce the modified Bianchi--Egnell quotient 
\begin{equation}
\label{BE tilde}
\widetilde{\mathcal E}(u) := \frac{(u, P_s u) - S_{d,s} \|u\|_p^2}{\dist_{H^s(\mathbb S^d)}(u, \mathcal C)^2}.
\end{equation}
The difference of $\widetilde{\mathcal E}(u)$ to $\mathcal E(u)$ is that the denominator in \eqref{BE tilde} contains the ${H}^s$-distance to the set $\mathcal C \subset \mathcal M$ of  constant functions, instead of all optimizers $\mathcal M$. The advantage of $\tilde{\mathcal{E}}(u)$ for computations is that the function $c \in \mathcal C$ realizing $\dist_{H^s(\mathbb S^d)}(u, \mathcal C)$ can be determined very easily for every function $u \in H^s(\mathbb S^d)$, while this is not so for $\dist_{H^s(\mathbb S^d)}(u, \mathcal M)$. 

Since $\mathcal C \subset \mathcal M$, we moreover have 
\begin{equation}
\label{E geq E tilde}
\mathcal E(u) \geq  \widetilde{\mathcal E}(u) \qquad \text{ for all } u \in H^s(\mathbb S^d).
\end{equation}
Finally, for small enough $\beta$ we actually have $\mathcal E(u_\beta) =\widetilde{\mathcal E}(u_\beta)$: this follows from \cite[Lemma 12]{Frank2023b} together with the fact that for the distance minimizer $c_\beta$ of $\dist_{H^s(\mathbb S^d)}(u_\beta, \mathcal C)$, one has $\int_{\mathbb S^d} (u_\beta - c_\beta) = \int_{\mathbb S^d} (u_\beta - c_\beta) \omega = 0$, for all $\beta \in (0,1)$. (However, for $\beta$ close to 1 the minimizer of $\dist_{H^s(\mathbb S^d)}(u_\beta, \mathcal M)$ must be close to $v_\beta$ or $v_{-\beta}$, and hence $\mathcal E(u_\beta) > \widetilde{\mathcal E}(u_\beta)$ for such $\beta$.)

To simplify computations further, we only consider a special choice of $p$ which gives an algebraically simple expression, namely $p = 3$. We will make some largely analogous computations for $p = 4$ in the next subsection.

Our goal in this subsection is thus to confirm that for $d = 1$ and $p = 3$ (i.e., $s = \frac{1}{6}$) we have 
\begin{equation}
\label{ineq p=3}
\widetilde{\mathcal E}(u_\beta) > c_{BE}^{\loc}(\frac{1}{6}) = \frac{1}{5} \qquad \text{ for all } \beta \in (0,1). 
\end{equation}
By \eqref{E geq E tilde}, this implies in particular $\mathcal E(u_\beta) > c_{BE}^{\loc}(\frac{1}{6})$. 

Unfortunately, it turns out that we are only able to prove \eqref{ineq p=3} \emph{numerically}. It would be desirable to obtain a mathematically rigorous proof of \eqref{ineq p=3} and to extend \eqref{ineq p=3} to all values of $s \in (0, \frac{1}{2})$, either numerically or rigorously.

We emphasize once more that for $d \geq 2$ property \eqref{ineq p=3} must fail for small enough $\beta$, as a consequence of Proposition \ref{proposition local two-bubbles}. Nevertheless, we carry out the following computations for general $d \geq 1$, because they present no additional difficulty and because the results may be of some independent use. 

In the following lemma, we express $\widetilde{\mathcal E}(u_\beta)$ conveniently in terms of the quantity
\begin{equation}
\label{I beta definition}
I(\beta):= \frac{1}{|\mathbb S^d|} \int_{\mathbb S^d} v_\beta.
\end{equation}  

\begin{lemma}
\label{lemma tilde E}
Let $p = 3$ and $d \geq 1$, so that $s = \frac{d}{6}$. Then for every $\beta \in (0,1)$, we have 
\[ \widetilde{\mathcal E}(u_\beta)= \frac{1 + I(\gamma(\beta)) - 2^{-1/3} (1 + 3 I(\gamma(\beta)))^{2/3}}{1 + I(\gamma(\beta)) - 2 I(\beta)^2}, \]
where $\gamma(\beta) = \frac{2 \beta}{1 + \beta^2}$. 
\end{lemma}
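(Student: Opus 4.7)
The plan is to compute the numerator $(u_\beta, P_s u_\beta) - S_{d,s}\|u_\beta\|_p^2$ and the denominator $\dist_{H^s(\mathbb{S}^d)}(u_\beta, \mathcal{C})^2$ of $\widetilde{\mathcal{E}}(u_\beta)$ separately. The special choices $p=3$ and $u_\beta = v_\beta + v_{-\beta}$ cooperate so that both quantities, up to the common prefactor $2\alpha(0)|\mathbb{S}^d|$, become simple algebraic expressions in a single cross-integral $J := \frac{1}{|\mathbb{S}^d|}\int_{\mathbb{S}^d} v_\beta v_{-\beta}^2$ and in the mean $I(\beta)$. The main step is then the conformal identity $J = I(\gamma(\beta))$, which reflects the one-parameter group law $\lambda_\beta^2 = \lambda_{\gamma(\beta)}$ for dilations on $\R^d$.

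For the numerator, the Euler--Lagrange equation for the Sobolev optimizer $v_\beta$, combined with the normalization \eqref{v beta normalization}, gives $P_s v_\beta = \alpha(0) v_\beta^{p-1}$, so $(v_\beta, P_s v_{-\beta}) = \alpha(0)\int v_\beta v_{-\beta}^{p-1}$. For $p = 3$ this yields $(u_\beta, P_s u_\beta) = 2\alpha(0)|\mathbb{S}^d|(1+J)$; while a binomial expansion of $\|u_\beta\|_p^p = \int(v_\beta+v_{-\beta})^3$, using $\int v_{\pm\beta}^p = |\mathbb{S}^d|$ and the $\beta\leftrightarrow-\beta$ symmetry $\int v_\beta^2 v_{-\beta} = \int v_\beta v_{-\beta}^2$, gives $\|u_\beta\|_p^p = 2|\mathbb{S}^d|(1+3J)$, whence $S_{d,s}\|u_\beta\|_p^2 = 2^{2/3}\alpha(0)|\mathbb{S}^d|(1+3J)^{2/3}$ after inserting $S_{d,s} = \alpha(0)|\mathbb{S}^d|^{1/3}$. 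For the denominator, the $H^s$-projection of $u_\beta$ onto $\mathcal{C}$ is the constant equal to the $L^2$-mean $\frac{1}{|\mathbb{S}^d|}\int u_\beta$, which by the $\omega_{d+1}\to-\omega_{d+1}$ symmetry equals $2 I(\beta)$; a brief computation then yields $\dist_{H^s(\mathbb{S}^d)}(u_\beta, \mathcal{C})^2 = (u_\beta, P_s u_\beta) - \alpha(0)(\int u_\beta)^2/|\mathbb{S}^d| = 2\alpha(0)|\mathbb{S}^d|(1+J-2I(\beta)^2)$.

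The remaining and main step is the conformal identity $J = I(\gamma(\beta))$. Let $\Phi_\beta$ be the Möbius transformation of $\mathbb{S}^d$ conjugate via stereographic projection to the $\R^d$-dilation $x\mapsto\lambda_\beta x$ with $\lambda_\beta = \sqrt{(1+\beta)/(1-\beta)}$; by construction $J_{\Phi_\beta}^{1/p} = v_\beta$. Since dilations form a one-parameter group, $\Phi_\beta\circ\Phi_{-\beta} = \mathrm{id}$ and $\Phi_\beta\circ\Phi_\beta = \Phi_{\gamma(\beta)}$, the latter because $\lambda_\beta^2$ corresponds to the parameter $\gamma(\beta) = 2\beta/(1+\beta^2)$. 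Applying the conformal transformation rule $u\mapsto (u\circ\Phi_\beta)\cdot J_{\Phi_\beta}^{1/p}$ to $u = 1$ and to $u = v_{\pm\beta}$ translates these group relations into the pointwise identities $v_\beta\circ\Phi_\beta = v_{\gamma(\beta)}/v_\beta$ and $v_{-\beta}\circ\Phi_\beta = 1/v_\beta$. The change of variables $\omega = \Phi_{-\beta}(\sigma)$ (with $d\omega = v_{-\beta}(\sigma)^p\, d\sigma$ and $\Phi_{-\beta}^{-1} = \Phi_\beta$) then yields
\[ \int v_\beta v_{-\beta}^2\, d\sigma = \int \frac{v_\beta(\sigma)}{v_{-\beta}(\sigma)}\, v_{-\beta}^p(\sigma)\, d\sigma = \int \frac{v_\beta(\Phi_\beta\omega)}{v_{-\beta}(\Phi_\beta\omega)}\, d\omega = \int v_{\gamma(\beta)}(\omega)\, d\omega = |\mathbb{S}^d|\, I(\gamma(\beta)), \]
and inserting this into the numerator and denominator produces the stated formula. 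The main obstacle is keeping the conformal bookkeeping—Jacobians, composition laws, and the direction of substitution—consistent; once the transformation laws for $v_\beta\circ\Phi_\beta$ and $v_{-\beta}\circ\Phi_\beta$ are in place, the rest is algebra.
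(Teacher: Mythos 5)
Your proof is correct and follows essentially the same route as the paper's: both reduce everything to the single cross-integral $\int_{\mathbb S^d} v_\beta v_{-\beta}^2$, identify it with $|\mathbb S^d|\, I(\gamma(\beta))$ via conformal invariance and the dilation group law $\lambda_\beta^2=\lambda_{\gamma(\beta)}$ (the content of Lemma~\ref{lemma gamma}), and then carry out the same algebra for $(u_\beta,P_su_\beta)$, $\|u_\beta\|_3^3$, and the projection $c_\beta=2I(\beta)$ onto $\mathcal C$. The only cosmetic difference is that you verify the key identity by an explicit M\"obius change of variables on the sphere, whereas the paper pulls back to $\mathbb R^d$ via stereographic projection and uses $(B_{\lambda},(-\Delta)^sB_{\lambda'})=(B_{\lambda/\lambda'},(-\Delta)^sB)$ before translating back.
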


Before giving the proof of this lemma, we make a useful observation which explains the origin of the expression for $\gamma(\beta)$. 

\begin{lemma}
\label{lemma gamma}
For $d \geq 1$, let $s \in (0, \frac{d}{2})$ and $p = \frac{2d}{d-2s} > 2$. For every $\beta, \beta' \in (-1, 1)$, we have 
\begin{equation}
\label{beta gamma identity lemma}
(v_\beta, P_s v_{\beta'}) = (v_\gamma, P_s 1) \qquad \text{ and } \qquad \int_{\mathbb S^d} v_\beta^{p-1} v_{\beta'} = \int_{\mathbb S^d} v_\beta v_{\beta'}^{p-1} = \int_{\mathbb S^d}  v_{\gamma} , 
\end{equation} 
where 
\[ \gamma = \gamma(\beta, \beta') = \frac{\beta - \beta'}{1 - \beta \beta'}. \]
In particular, if $\beta' = - \beta$, then $\gamma$ and $\beta$ are related by
\[ \gamma(\beta) := \gamma(\beta, -\beta) = \frac{2 \beta}{1 + \beta^2} \qquad \Leftrightarrow \qquad \beta = \frac{1 - \sqrt{1 - \gamma^2}}{\gamma} \]
(Here, if $\gamma = 0$, we interpret  $ \frac{1 - \sqrt{1 - \gamma^2}}{\gamma} = 0$.)
\end{lemma}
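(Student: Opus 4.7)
The plan is to transfer both sides of the identity to $\mathbb R^d$ via the stereographic projection \eqref{conf trafo}, use the elementary scaling behavior of $(-\Delta)^s$ to deduce the first identity, and then exploit the Euler--Lagrange equation satisfied by the optimizers $v_\beta$ to obtain the remaining two. The specialization and its inversion are then pure algebra.

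First, a direct computation starting from \eqref{v beta definition}, \eqref{ster proj definition}, and \eqref{conf trafo} identifies $v_\beta$ on the sphere with the dilated Talenti bubble $B_{\lambda(\beta)}(x) = \lambda(\beta)^{d/p} B(\lambda(\beta) x)$ on $\mathbb R^d$, where $B(x) = (2/(1+|x|^2))^{d/p}$ and
\[ \lambda(\beta) := \sqrt{\frac{1+\beta}{1-\beta}}. \]
Since \eqref{conf trafo} is an isometry for the norm $(u, P_s u)^{1/2}$, the same holds for the polarized bilinear form, so that $(v_\beta, P_s v_{\beta'}) = (B_{\lambda(\beta)}, (-\Delta)^s B_{\lambda(\beta')})_{L^2(\mathbb R^d)}$. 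A change of variables using the scaling rule $(-\Delta)^s B_\mu(x) = \mu^{2s + d/p} ((-\Delta)^s B)(\mu x)$, in which the exponents conspire through $p = 2d/(d-2s)$, yields the reduction $(B_\mu, (-\Delta)^s B_\nu)_{L^2(\mathbb R^d)} = (B_{\mu/\nu}, (-\Delta)^s B_1)_{L^2(\mathbb R^d)}$. Hence $(v_\beta, P_s v_{\beta'}) = (v_\delta, P_s 1)$, where $\delta$ is characterized by $\lambda(\delta) = \lambda(\beta)/\lambda(\beta')$, and the first identity then follows from the algebraic verification
\[ \lambda(\gamma)^2 = \frac{1+\gamma}{1-\gamma} = \frac{(1-\beta\beta')+(\beta-\beta')}{(1-\beta\beta')-(\beta-\beta')} = \frac{(1+\beta)(1-\beta')}{(1-\beta)(1+\beta')} \]
for $\gamma = (\beta-\beta')/(1-\beta\beta')$.

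For the two $L^p$-type identities I would invoke the Euler--Lagrange equation $P_s v_\beta = \alpha(0) v_\beta^{p-1}$, which is valid for every $\beta \in (-1,1)$ as a consequence of the normalization \eqref{v beta normalization} together with the identification $S_{d,s} = \alpha(0) |\mathbb S^d|^{(p-2)/p}$ obtained by applying \eqref{v beta normalization} to $\beta=0$. Testing against $v_{\beta'}$ and using the symmetry of $P_s$ gives
\[ (v_\beta, P_s v_{\beta'}) = \alpha(0) \int_{\mathbb S^d} v_\beta^{p-1} v_{\beta'} = \alpha(0) \int_{\mathbb S^d} v_\beta v_{\beta'}^{p-1}, \]
while $P_s 1 = \alpha(0)$ produces $(v_\gamma, P_s 1) = \alpha(0) \int v_\gamma$. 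Combining with the first identity yields the two remaining equalities at once.

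Finally, the specialization $\beta' = -\beta$ is a direct substitution giving $\gamma(\beta,-\beta) = 2\beta/(1+\beta^2)$, and the inverse relation follows by solving the quadratic $\gamma \beta^2 - 2\beta + \gamma = 0$ and selecting the branch $\beta = (1-\sqrt{1-\gamma^2})/\gamma$ that vanishes as $\gamma \to 0$. The only real obstacle in the whole argument is tracking the powers $d/p$ and $2s$ in the scaling computation so that the factors outside the integral collapse cleanly into $(\mu/\nu)^{d/p}$; once this bookkeeping is done, the remainder of the lemma is essentially elementary algebra.
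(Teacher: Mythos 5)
Your proposal is correct and follows essentially the same route as the paper: identify $v_\beta$ with the dilated bubble $B_{\lambda(\beta)}$ under stereographic projection, use conformal invariance and scaling to reduce $(v_\beta,P_s v_{\beta'})$ to $(v_\gamma,P_s 1)$ with the same algebraic verification of $\lambda(\gamma)=\lambda(\beta)/\lambda(\beta')$, and then derive the integral identities from $P_s v_\beta=\alpha(0)v_\beta^{p-1}$. The only differences are cosmetic (you spell out the scaling bookkeeping and the origin of the Euler--Lagrange equation slightly more explicitly than the paper does).
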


\begin{proof}
Let $B(x) = \left( \frac{2}{1 + |x|^2} \right)^d$ and denote $B_\lambda(x) = \lambda^\frac{d}{p} B(\lambda x)$. 

A direct computation then gives that 
\[ (v_\beta)_{\mathcal S} = B_{\lambda}, \]
where the transformation $u \mapsto u_{\mathcal S}$ is defined by \eqref{conf trafo}, and $\lambda$ and $\beta$ are related by
\begin{equation}
\label{beta lambda relation}
\lambda = \lambda(\beta) = \sqrt{\frac{1+\beta}{1 - \beta}} \qquad \Leftrightarrow \qquad  \beta = \beta(\lambda) = \frac{\lambda^2-1}{\lambda^2+1}. 
\end{equation} 
By conformal invariance, we have
\[ (v_\beta, P_s v_{\beta'}) = (B_{\lambda(\beta)}, (-\Delta)^s B_{\lambda(\beta')})_{L^2(\R^d)} = (B_{\lambda(\beta) \lambda(\beta')^{-1}}, (-\Delta)^s B)_{L^2(\R^d)}  = (v_{{\gamma}}, P_s 1), \]
where, by \eqref{beta lambda relation}, $\gamma$ is given by 
\[ {{\gamma}} =  \frac{\lambda(\beta)^2 \lambda(\beta')^{-2} - 1}{\lambda(\beta)^2 \lambda(\beta')^{-2} +1} =  \frac{\frac{1+\beta}{1 - \beta} \frac{1-\beta'}{1 + \beta'} - 1}{\frac{1+\beta}{1 - \beta} \frac{1-\beta'}{1 + \beta'} + 1} = \frac{(1 + \beta) (1 - \beta') - (1 - \beta) (1 + \beta')}{(1 + \beta) (1 - \beta') + (1 - \beta) (1 + \beta')} = \frac{\beta - \beta'}{1 - \beta \beta' }  \]
as claimed. The second identity in \eqref{beta gamma identity lemma} follows from this by using that $P_s v_\beta = \alpha(0) v_\beta^{p-1}$ for every $\beta \in (-1, 1)$. 

The expression $\beta = \frac{1 - \sqrt{1 - \gamma^2}}{\gamma}$ comes from solving the quadratic equation $(1 + \beta^2) \gamma = 2 \beta$ and taking into account that both $\gamma$ and $\beta$ are in $(-1,1)$. 
\end{proof}

\begin{proof}
[Proof of Lemma \ref{lemma tilde E}]
We compute the terms in $\widetilde{\mathcal E}(u_\beta)$ separately, making repeated use of the normalization \eqref{v beta normalization} and Lemma \ref{lemma gamma}. We have
\begin{align*}
(u_\beta, P_s u_\beta) &= 2 (1, P_s 1) + 2 (v_{-\beta}, P_s v_\beta) =  2 |\mathbb S^d| \alpha(0) + 2 (v_{\gamma(\beta)}, P_s 1) \\
& = 2 |\mathbb S^d| \alpha(0) + 2 \alpha(0) \int_{\mathbb S^d} v_{\gamma(\beta)} = 2 |\mathbb S^d| \alpha(0) \left(1 + I(\gamma(\beta))\right), 
\end{align*} 

Similarly, we obtain (here is where we use $p=3$ to simply multiply out the terms!)
\[ \int_{\mathbb S^d} u_\beta^3 = 2 \int_{\mathbb S^d} 1 + 6 \int_{\mathbb S^d} v_\beta^2 v_{-\beta} = 2 |\mathbb S^d| + 6 |\mathbb S^d|  I(\gamma(\beta)). \]
Finally, it is easy to see that $\dist (u_\beta, \mathcal C)$ is uniquely achieved by the constant function $c_\beta$ which satisfies $\int_{\mathbb S^d} c_\beta = \int_{\mathbb S^d} u_\beta = 2 \int_{\mathbb S^d} v_\beta$, i.e.,
\begin{equation}
\label{c beta}
c_\beta = \frac{2}{|\mathbb S^d|} \int_{\mathbb S^d} v_\beta = 2 I(\beta). 
\end{equation} 
Therefore 
\begin{align*}
\dist(u_\beta, \mathcal C)^2 &= (u_\beta - c_\beta, P_s (u_\beta - c_\beta)) = (u_\beta, P_s u_\beta) - 2 (u_\beta, P_s c_\beta) + (c_\beta, P_s c_\beta) \\
&= (u_\beta, P_s u_\beta) - 2 \alpha(0) c_\beta \int_{\mathbb S^d} u_\beta + |\mathbb S^d| \alpha(0) c_\beta^2  \\
&= (u_\beta, P_s u_\beta) - |\mathbb S^d| \alpha(0) c_\beta^2 \\
&=2 |\mathbb S^d| \alpha(0) \left(1 + I(\gamma(\beta))\right) - 4  |\mathbb S^d|  \alpha(0) I(\beta)^2.
\end{align*}

Combining everything, and observing $S_{d,s} = \alpha(0) |\mathbb S^d|^{1 - \frac{2}{p}} = \alpha(0) |\mathbb S^d|^{\frac{1}{3}}$ since  $p=3$, we obtain 
\begin{align*}
\widetilde{\mathcal E}(u_\beta) &= \frac{(u_\beta, P_s u_\beta) - S_{d,s} \|u_\beta\|_3^2}{\dist(u_\beta, \mathcal C)^2} \\
&= \frac{2 |\mathbb S^d|  \alpha(0) \left(1 + I(\gamma(\beta))\right) - \alpha(0)|\mathbb S^d| ^{1/3} \left(2 |\mathbb S^d|  + 6 |\mathbb S^d|  I(\gamma(\beta))\right)^{2/3} }{2 |\mathbb S^d|  \alpha(0) (1 + I(\gamma(\beta))) - 4 |\mathbb S^d|  \alpha(0) I(\beta)^2} \\
&= \frac{1 + I(\gamma(\beta)) - 2^{-1/3} \left(1 + 3 I(\gamma(\beta))\right)^{2/3}}{1 + I(\gamma(\beta)) - 2 I(\beta)^2},
\end{align*}
which is the claimed expression. 
\end{proof}

 To obtain a plot of the values of $\widetilde{\mathcal E}(u_\beta)$, we now express $I(\beta)$ in terms of a suitable hypergeometric function $_2F_1(a,b; c; z)$. A useful reference for the definition and properties of  $_2F_1(a,b; c; z)$ is \cite[Chapter 15]{AbSt}. In any case, all we need to know about the function $_2F_1(a,b; c; z)$ is the identity stated in \eqref{hypergeo int rep} below. 
 
In fact, with no extra work we can obtain an expression for 
\begin{equation}
\label{I q beta definition}
I_q(\beta) := \frac{1}{|\mathbb S^d|} \int_{\mathbb S^d} v_\beta^q, 
\end{equation} 
for any $q > 0$, which we will use in the next subsection for $q = 2$ as well.  

\begin{lemma}
\label{lemma hypergeo}
Let $d \geq 1$ and $p > 2$. For $\beta \in (0,1)$ and $q > 0$, let $I_q(\beta)$ be given by \eqref{I q beta definition}. Then 
\[ I_q(\beta) = c_d \left( \frac{1 - \beta}{1 + \beta} \right)^\frac{dq}{2p} {_2F_1} \left( \frac{dq}{p}, \frac{d}{2} ; d ; \frac{2 \beta}{1+ \beta} \right), \]
where $c_d = 2^{d-1} \frac{|\mathbb S^{d-1}|}{|\mathbb S^{d}|} \frac{\Gamma(d/2)^2}{\Gamma(d)} =    2^{d-1} \frac{\Gamma(\frac{d}{2}) \Gamma(\frac{d+1}{2})}{\Gamma(\frac{1}{2}) \Gamma(d)}$. 

With $z = \frac{2 \beta}{1 + \beta}$, we can rewrite this as
\[ I_q(\beta) = c_d (1 - z)^\frac{dq}{2p} {_2F_1} \left( \frac{dq}{p}, \frac{d}{2} ; d ; z \right) =: J(z). \]
\end{lemma}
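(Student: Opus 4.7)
The idea is to reduce the integral defining $I_q(\beta)$ to Euler's integral representation of $_2F_1$,
\begin{equation}
\label{hypergeo int rep plan}
{}_2F_1(a,b;c;z) \;=\; \frac{\Gamma(c)}{\Gamma(b)\Gamma(c-b)} \int_0^1 t^{b-1}(1-t)^{c-b-1}(1-zt)^{-a}\, dt,
\end{equation}
by an explicit change of variables. First, using the definition \eqref{v beta definition} of $v_\beta$, I would write
\[ I_q(\beta) \;=\; \frac{(1-\beta^2)^{dq/(2p)}}{|\mathbb{S}^d|} \int_{\mathbb{S}^d} (1 - \beta \omega_{d+1})^{-dq/p}\, d\omega. \]
Passing to spherical coordinates with polar angle $\theta \in [0,\pi]$ around the north pole, so that $\omega_{d+1}=\cos\theta$ and the surface area element reduces to $|\mathbb{S}^{d-1}|\,\sin^{d-1}\theta\, d\theta$, this becomes
\[ I_q(\beta) \;=\; \frac{|\mathbb{S}^{d-1}|}{|\mathbb{S}^d|}(1-\beta^2)^{dq/(2p)} \int_0^\pi (1 - \beta \cos\theta)^{-dq/p}\, \sin^{d-1}\theta\, d\theta. \]

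Next, I would perform the substitution $t = \tfrac{1+\cos\theta}{2}$ (equivalently $u = \cos\theta$ followed by $t = (1+u)/2$), which maps $[0,\pi]$ bijectively onto $[0,1]$. A direct computation gives
\[ \sin^{d-1}\theta\, d\theta \;=\; 2^{d-1}\bigl(t(1-t)\bigr)^{(d-2)/2}\, dt, \qquad 1 - \beta\cos\theta \;=\; (1+\beta)\!\left(1 - \tfrac{2\beta}{1+\beta}\, t\right). \]
Setting $z = \tfrac{2\beta}{1+\beta}$, this converts the integral into
\[ I_q(\beta) \;=\; \frac{|\mathbb{S}^{d-1}|}{|\mathbb{S}^d|} \, 2^{d-1}\,(1-\beta^2)^{dq/(2p)}(1+\beta)^{-dq/p} \int_0^1 t^{d/2-1}(1-t)^{d/2-1}(1-zt)^{-dq/p}\, dt. \]

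Comparing the last integral with \eqref{hypergeo int rep plan} with the identification $a = dq/p$, $b = d/2$, $c = d$, I would obtain
\[ \int_0^1 t^{d/2-1}(1-t)^{d/2-1}(1-zt)^{-dq/p}\, dt \;=\; \frac{\Gamma(d/2)^2}{\Gamma(d)}\; {}_2F_1\!\left(\tfrac{dq}{p}, \tfrac{d}{2}; d; z\right). \]
The remaining prefactor simplifies because $(1-\beta^2)^{dq/(2p)}(1+\beta)^{-dq/p} = \bigl(\tfrac{1-\beta}{1+\beta}\bigr)^{dq/(2p)} = (1-z)^{dq/(2p)}$, yielding the claimed formula with constant
\[ c_d \;=\; 2^{d-1}\,\frac{|\mathbb{S}^{d-1}|}{|\mathbb{S}^d|}\,\frac{\Gamma(d/2)^2}{\Gamma(d)}. \]
Finally, I would verify the second expression for $c_d$ using $|\mathbb{S}^{d-1}| = 2\pi^{d/2}/\Gamma(d/2)$ and $|\mathbb{S}^d| = 2\pi^{(d+1)/2}/\Gamma((d+1)/2)$, which gives $|\mathbb{S}^{d-1}|/|\mathbb{S}^d| = \Gamma((d+1)/2)/(\Gamma(d/2)\Gamma(1/2))$ and hence $c_d = 2^{d-1} \Gamma(d/2)\Gamma((d+1)/2)/(\Gamma(1/2)\Gamma(d))$.

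There is no real obstacle here; the proof is essentially bookkeeping. The only step that needs a little care is identifying the right substitution $t = (1+\cos\theta)/2$ so that the integrand acquires the exact form $t^{b-1}(1-t)^{c-b-1}(1-zt)^{-a}$ demanded by Euler's representation, and keeping track of the factor $(1-\beta^2)^{dq/(2p)}$ to match it against $(1-z)^{dq/(2p)}$. Once these two ingredients are in place, comparing with \eqref{hypergeo int rep plan} and checking the constant is immediate.
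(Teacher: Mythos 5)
Your proposal is correct and follows essentially the same route as the paper: Euler's integral representation of $_2F_1$, reduction to a one-dimensional integral in the polar angle, and the affine substitution sending $\cos\theta$ to $t\in[0,1]$, with the same identification $a=dq/p$, $b=d/2$, $c=d$ and the same bookkeeping of the prefactor. No gaps.
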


\begin{proof}
We recall the integral representation of the hypergeometric function, which reads \cite[eq. 15.3.1]{AbSt}
\begin{equation}
\label{hypergeo int rep}
_2F_1(a,b;c;z) = \frac{\Gamma(c)}{\Gamma(b)\Gamma(c-b)} \int_0^1 t^{b-1} (1 -t)^{c - b - 1} (1 - zt)^{-a} \, dt. 
\end{equation}
Thus, for every $\beta \in (0,1)$, we have 
\begin{align*}
I_q(\beta) &= \frac{|\mathbb S^{d-1}|}{|\mathbb S^{d}|} (1 - \beta^2)^\frac{dq}{2p} \int_0^{ \pi} (1 - \beta \cos \theta)^{-\frac{dq}{p}} \sin^{d-1} \theta \, d \theta \\
& =  \frac{|\mathbb S^{d-1}|}{|\mathbb S^{d}|} (1 - \beta^2)^\frac{dq}{2p} \int_{-1}^{1} (1 - \beta t)^{-\frac{dq}{p}} (1 - t^2)^\frac{d-2}{2} \, d t \\
&= \frac{|\mathbb S^{d-1}|}{|\mathbb S^{d}|} 2^{d-1} \left( \frac{1 - \beta}{1 + \beta} \right)^\frac{dq}{2p} \int_0^1  (1 - \frac{2 \beta}{ 1 + \beta} t)^{-\frac{dq}{p}} t^\frac{d-2}{2} (1- t)^\frac{d-2}{2} \, dt. 
\end{align*}
Using \eqref{hypergeo int rep} with $a = dq/p$, $b = d/2$ and $c = d$, and inserting $|\mathbb S^{d-1}| = \frac{2 \pi^{d/2}}{\Gamma(\frac{d}{2})}$ we obtain the claimed expression. 

Finally, for $z = \frac{2 \beta}{1 + \beta}$ we have $\beta = \frac{z}{2-z}$ and hence $\frac{1- \beta}{1 + \beta} = 1 - z$ by direct computation. This yields the claimed expression for $J(z)$. 
\end{proof}

We can now combine Lemmas \ref{lemma tilde E} and \ref{lemma hypergeo} to express $\widetilde{\mathcal E}(u_\beta)$ as an explicit function of one variable, which we can evaluate numerically. 

Indeed, for $\gamma(\beta)= \frac{2\beta}{1 + \beta^2}$, direct computations give $\frac{1 - \gamma}{1+ \gamma} = \left( \frac{1 -\beta}{1 + \beta} \right)^2$ and $\frac{2 \gamma}{1 + \gamma} = \frac{4 \beta}{(1 + \beta)^2}$, so that 
\[ I(\gamma(\beta)) = c_d \left( \frac{1 - \beta}{1 + \beta} \right)^\frac{d}{p} {_2F_1} \left( \frac{d}{p}, \frac{d}{2} ; d ;\frac{4 \beta}{(1+ \beta)^2} \right). \]
In terms of $z = \frac{2 \beta}{1 + \beta}$, we can write this as
\[ I(\gamma(\beta)) = c_d (1 - z)^\frac{d}{p} {_2F_1} \left( \frac{d}{p}, \frac{d}{2} ; d ; z (2 - z) \right). \]
To simplify even further the resulting expression for $\widetilde{\mathcal E}(u_\beta)$ (\eqref{E u beta final} below), it is convenient to rewrite this once again, in terms of the variable $y = z(2-z) \, \Leftrightarrow \, z = 1 - \sqrt{1-y}$, as 
\begin{equation}
\label{m definition}
 I(\gamma(\beta)) = c_d (1 - y)^\frac{d}{2p} {_2F_1} \left( \frac{d}{p}, \frac{d}{2} ; d ; y \right) =: m(y). 
\end{equation}
The expression for $I(\beta)$ becomes accordingly 
\[
I(\beta) = c_d (1 - y)^\frac{d}{4p} {_2F_1} \left( \frac{d}{p}, \frac{d}{2} ; d ; 1 - \sqrt{1 - y} \right) = m(1 - \sqrt{1 - y}). 
\]

Summing up, for $y = z(2-z) = \frac{4\beta}{(1 + \beta)^2}$ (which varies between $0$ and $1$ as $\beta$ does), we have  
\begin{equation}
\label{E u beta final}
\widetilde{\mathcal E}(u_\beta) = \frac{1 + m(y) - 2^{-1/3} (1 + 3 m(y))^{2/3}}{1 + m(y) - 2 m(1 - \sqrt{1-y})^2} =: \frac{e_1(y)}{e_2(y)} =: e(y) .
\end{equation} 

Figure \ref{plot} shows the plot of the function $e(y)$ for dimension $d= 1$.  As expected, it goes to $\frac{1}{5} = c_{BE}^{\text{loc}}(\frac{1}{6})$ as $y \to 0$. Moreover, from the plot, $e(y)$ is manifestly increasing in $y$. Equivalently, $\widetilde{\mathcal E}(u_\beta)$ is increasing in $\beta \in (0,1)$.  This confirms the desired inequality \eqref{ineq p=3}, at least numerically.

\begin{figure}[h]
\centering
\includegraphics[width=0.7\textwidth]{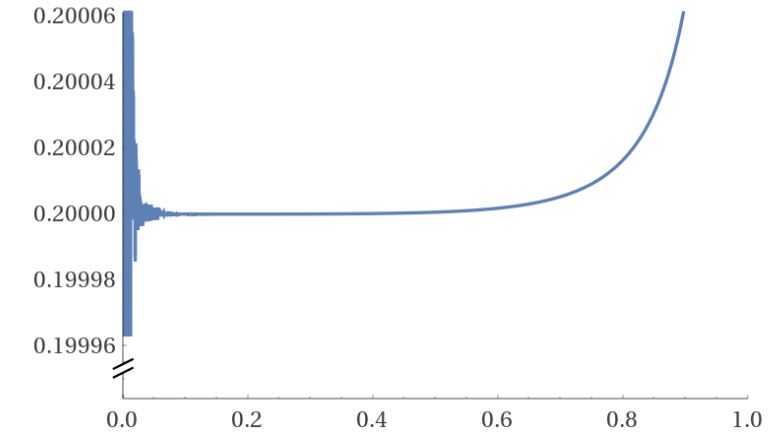}
\caption{Case $p = 3$: Plot of $y \mapsto e(y)$ for $d = 1$. }
\label{plot}
\end{figure}

\begin{figure}[h]
\centering
\includegraphics[width=0.7\textwidth]{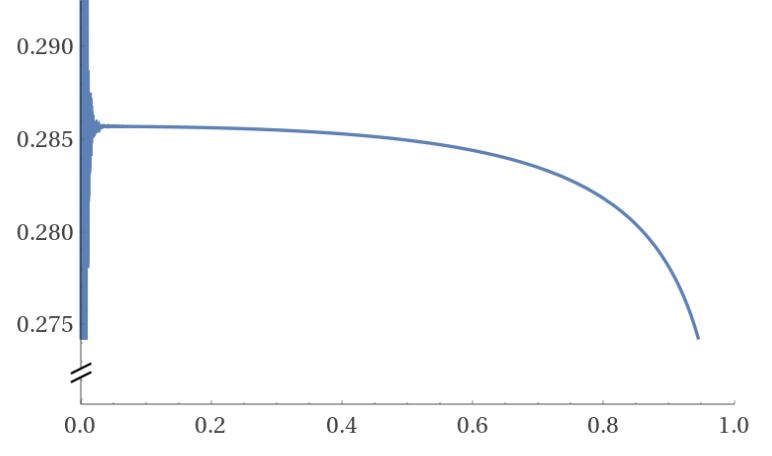}
\caption{Case $p = 3$: Plot of $y \mapsto e(y)$ for $d = 2$. }
\label{plot p=3 d=2}
\end{figure}

\begin{figure}[h]
\centering
\includegraphics[width=0.7\textwidth]{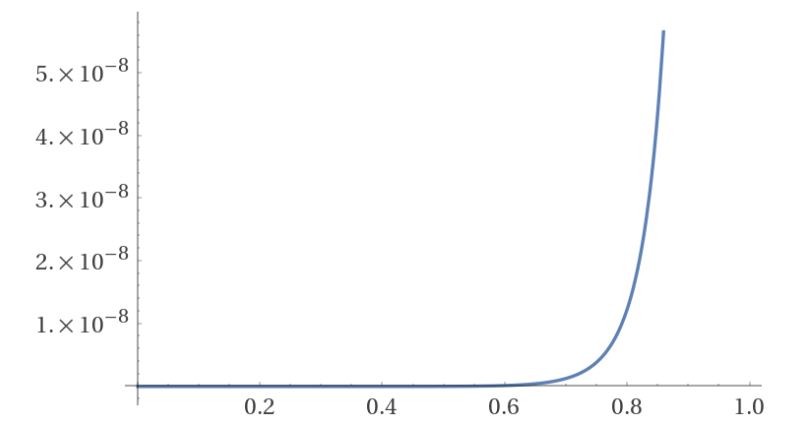}
\caption{Case $p = 3$: Plot of $y \mapsto e_1(y) - \frac{1}{5} e_2(y)$ for $d = 1$. }
\label{plot p=3 alt}
\end{figure}

The difference to higher dimensions $d \geq 2$ becomes clear when looking at the plot of $y \mapsto e(y)$ for $p=3$ and $d=2$ in Figure \ref{plot p=3 d=2}, which shows that $\widetilde{\mathcal E}(u_\beta)$ is decreasing in $\beta \in (0,1)$. This is consistent with the results of \cite{Koenig} and with the fact that $\lim_{\beta \to 1} \widetilde{\mathcal E}(u_\beta) = 1 - 2^{-\frac{2s}{d}} < \frac{4s}{d + 2s +2} = \lim_{\beta \to 0} \widetilde{\mathcal E}(u_\beta)$ for all $d \geq 2$, $s \in (0, d/2)$. (On the other hand, for $d = 1$ one has indeed $1 - 2^{-2s} > \frac{4s}{1 + 2s +2}$, consistently with Figure \ref{plot}.)

The wild oscillations at the left end of the graphs in Figures \ref{plot} and \ref{plot p=3 d=2} come with near certainty from numerical instabilities only. Their presence is greatly reduced if one plots for instance the less singular expression $e_1(y) - \frac{1}{5} e_2(y)$. Indeed, Figure \ref{plot p=3 alt}   confirms graphically that for $d = 1$ one has $e_1(y) - \frac{1}{5} e_2(y) > 0$, which is equivalent to \eqref{ineq p=3}. 

One may appreciate how close the graphs in all figures remain to their respective extremal values on a large part of the interval $(0,1)$.

\subsection{The case $p = 4$}
\label{subsection p=4}

Another easy test case which gives algebraically simple expressions is $p = 4$, i.e., $s = \frac{d}{4}$.  We again evaluate $\widetilde{\mathcal E}(u_\beta)$ for $u_\beta = v_\beta + v_{-\beta}$, with $v_\beta$ given by \eqref{v beta definition}. Since we proceed analogously to the case $p = 3$, we give fewer details here for the sake of brevity. 

Similar computations as in the proof of Lemma \ref{lemma tilde E} give 
\begin{align*}
\widetilde{\mathcal E}(u_\beta) &= \frac{1 + I(\gamma(\beta)) - 2^{-1/2} (1 + 4 I(\gamma(\beta)) + \frac{3}{|\mathbb S^d|} \int_{\mathbb S^d} v_\beta^2 v_{-\beta}^2 )^{1/2}}{1 + I(\gamma(\beta)) - 2 I(\beta)^2},
\end{align*}
where again 
\[ I(\beta) = \frac{1}{|\mathbb S^d|} \int_{\mathbb S^d} v_\beta. \]

\begin{figure}[h]
\centering
\includegraphics[width=0.7\textwidth]{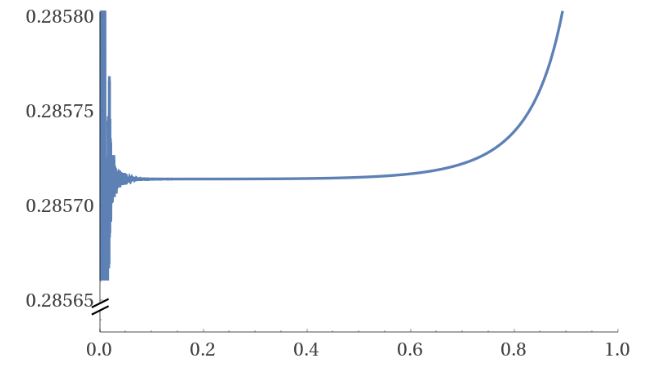}
\caption{Case $p = 4$: Plot of $y \mapsto f(y)$ for $d = 1$.}
\label{plot p=4}
\end{figure}

Using the computations and notations from the proof of Lemma \ref{lemma gamma}, by conformal invariance we have
\[ \int_{\mathbb S^d} v_\beta^2  v_{-\beta}^2  = \int_{\R^d} B_{\lambda(\beta)}^2  B_{\lambda(-\beta)}^2 = \int_{\R^d} B_{\lambda(\beta) \lambda(\beta')^{-1}} B  =  \int_{\mathbb S^d} v_{\gamma(\beta)}^2, \]
and thus 
 \begin{align*}
\frac{1}{|\mathbb S^d|} \int_{\mathbb S^d} v_\beta^2 v_{-\beta}^2 &= \frac{1}{|\mathbb S^d|} \int_{\mathbb S^d} v_{\gamma(\beta)}^2 = I_2 (\gamma(\beta)). 
\end{align*}

Thus, as in the case $p= 3$, with $y = \frac{4 \beta}{(1 + \beta)^2}$ we can write 
\[
\widetilde{\mathcal E}(u_\beta) = \frac{1 + m(y) - 2^{-1/2} (1 + 4 m(y) + 3 m_2(y) )^{1/2}}{1 + m(y) - 2  m(1 - \sqrt{1-y})^2} =: f(y),
\]
with $m$ as in \eqref{m definition} (for $p = 4$), and
\[ m_2(y) := I_2(\gamma(\beta)) = c_d (1 - y)^\frac{d}{4} {_2F_1} \left( \frac{d}{2}, \frac{d}{2} ; d ; y \right), \]
by Lemma \ref{lemma hypergeo}. 

Written in this form, we can plot the function $y \mapsto f(y)$ for $d = 1$, see Figure \ref{plot p=4}. The qualitative behavior of the plot is exactly the same as in the case $p=3$ we have studied before: $f(y)$ is strictly increasing in $y  \in (0,1)$, with $\lim_{y \to 0} f(y) = \frac{2}{7} = c_{BE}^\text{loc}\left(\frac{1}{4}\right)$.

\textbf{A sign-changing family of test functions. } When $p=4$, since $u^4 = |u|^4$, it is particularly easy to study the value of ${\mathcal E}$ for sign-changing competitors. It is natural to look at the family 
\[ w_\beta = v_\beta - v_{-\beta} \]
with $v_\beta$ again as in \eqref{v beta definition}. It is easy to see that $\mathcal E(u) \leq \mathcal E(|u|)$ for every $u$, and so it seems reasonable to expect that the $w_\beta$ are actually even stronger competitors than the $u_\beta = v_\beta + v_{-\beta}$. However, we shall see that this is not true. Actually, we will see (still numerically) that 
\begin{equation}
\label{ineq signchanging}
\widetilde{\mathcal E}(w_\beta) > 1 - 2^{-1/2} > c_{BE}^{\text{loc}}\left(\frac{1}{4}\right) = \frac{2}{7}. 
\end{equation} 
Indeed, since $\int_{\mathbb S^1} w_\beta = 0$, the infimum in $\dist(w_\beta, \mathcal C)$ is realized by the zero function and therefore simply $\dist(w_\beta, \mathcal C) = (w_\beta, P_s w_\beta)$ in this case. With practically the same computations as in the previous cases, we then obtain 
\begin{align*}
\widetilde{\mathcal E}(w_\beta) = \frac{1 + m(y) - 2^{-1/2} (1 - 4 m(y) + 3 m_2(y))}{1 + m(y)} =:g (y).
\end{align*} 

Plotting shows that $\widetilde{\mathcal E}(w_\beta)$ is strictly decreasing, see Figure \ref{plot p=4 signchanging}.

Moreover, as $\beta \to 1$ we have
\[ \dist(w_\beta, \mathcal C) = (w_\beta, P_s w_\beta)  = 2 (v_\beta, P_s v_\beta) + o(1) = 2 \dist(w_\beta, \mathcal M) + o(1). \]
Therefore,
\[ \lim_{\beta \to 1}  \widetilde{\mathcal E}(w_\beta) = \frac{1}{2} \lim_{\beta \to 1}  {\mathcal E}(w_\beta) =  \frac{1}{2} \lim_{\beta \to 1}  {\mathcal E}(u_\beta) = 1 - 2^{-1/2} > \frac{2}{7} = c_{BE}^\text{loc} \left( \frac{1}{4}\right). \]
where we used \eqref{c 2 bubble}. 

\begin{figure}[h]
\centering
\includegraphics[width=0.7\textwidth]{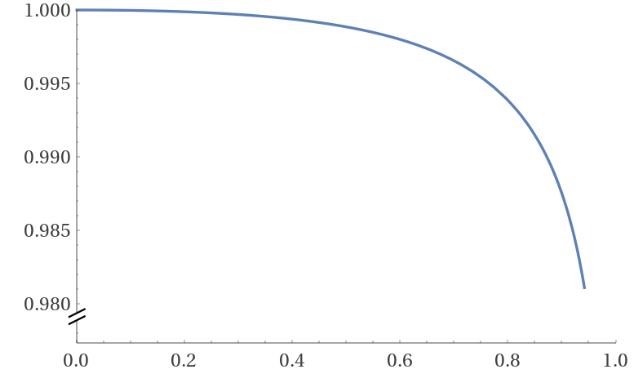}
\caption{Case $p = 4$: Plot of $y \mapsto g(y)$ for $d = 1$.} 
\label{plot p=4 signchanging}
\end{figure}


\begin{thebibliography}{21}

\bibitem{AbSt} M.~Abramowitz, I.~A.~Stegun, \textit{Handbook of mathematical functions with formulas, graphs, and mathematical tables. 10th printing, with corrections.}  A Wiley-Interscience Publication. New York etc.: John Wiley \& Sons.  (1972).

\bibitem{Au} T.~Aubin, \textit{Problèmes isoperimetriques et espaces de Sobolev}.
J. Differ. Geom. \textbf{11} (1976), 573--598.



\bibitem{BiEg} G.~Bianchi, H.~Egnell,
\textit{A note on the Sobolev inequality}. 
J. Funct. Anal. \textbf{100}, No. 1, 18-24 (1991).

\bibitem{BiCrHe} C.~Bianchini, G.~Croce, A.~Henrot. \textit{On the quantitative isoperimetric inequality in the
 plane}. ESAIM Control Optim. Calc. Var., 23(2) (2017), 517--549. 


\bibitem{ChFrWe} S.~Chen, R.~L.~Frank, T.~Weth, \textit{Remainder terms in the fractional Sobolev inequality}. Indiana Univ. Math. J. \textbf{62} (2013), No. 4, 1381--1397.

\bibitem{ChLuTa}
L.~Chen, G.~Lu, H.~Tang, \textit{Sharp Stability of Log-Sobolev and Moser-Onofri inequalities on the Sphere}. Preprint, arXiv:2210.06727


\bibitem{DeNiKo} N.~De~Nitti, T.~König, \textit{Stability with explicit constants of the critical points of the fractional Sobolev inequality and applications to fast diffusion.}
J. Funct. Anal. \textbf{285} (2023), No. 9, 110093. 


\bibitem{DeTi} 
S.~Deng, X.~Tian, \textit{On the stability of Caffarelli-Kohn-Nirenberg inequality in $\R^2$}. Preprint, arXiv:2308.04111. 

\bibitem{DoEsFiFrLo} J.~Dolbeault, M.~J.~Esteban, A.~Figalli, R.~L.~Frank, M.~Loss, \textit{Sharp stability for Sobolev and log-Sobolev inequalities, with optimal dimensional dependence}. Preprint, arXiv:2209.08651

\bibitem{Frank2023} R.~L.~Frank, \textit{Degenerate stability of some Sobolev inequalities.} Ann. Inst. H. Poincaré Anal. Non Linéaire {\bf 39} (2023), no. 6, pp. 1459–1484

\bibitem{Frank2023b}  R.~L.~Frank,  \textit{The sharp Sobolev inequality and its stability: an introduction}. Preprint, arXiv:2304.03115

\bibitem{FrKoTa} R.~L.~Frank, T.~König, H.~Tang, \textit{Reverse conformally invariant Sobolev inequalities on the sphere}. J. Funct. Anal. \textbf{282} (2022), No. 4, 109339. 

\bibitem{FrPe}  R.~L.~Frank, J.~W.~Peteranderl, \textit{Degenerate Stability of the Caffarelli-Kohn-Nirenberg Inequality along the Felli-Schneider Curve}. Preprint, arXiv:2308.07917. 


\bibitem{Koenig} T.~König, \textit{On the sharp constant in the Bianchi-Egnell stability inequality}. Bulletin of the London Mathematical Society \textbf{55} (2023), Issue 4, 2070--2075. 

\bibitem{Koenig-min} T.~König, \textit{Stability for the Sobolev inequality: existence of a minimizer}. Preprint, arXiv:2211.14185. 

\bibitem{Li} E.~H.~Lieb, \textit{Sharp constants in the Hardy--Littlewood--Sobolev and related inequalities}. Ann. of Math. (2) \textbf{118} (1983), no. 2, 349--374.



\bibitem{Ta}
G.~Talenti, \textit{Best constant in Sobolev inequality}. Ann. Mat. Pura Appl., IV. Ser. \textbf{110} (1976), 353--372.

\bibitem{WeWu}  J.~C.~Wei, Y.~Wu, \textit{Stability of the Caffarelli--Kohn--Nirenberg inequality: the existence of minimizers}. Preprint, arXiv:2308.04667




\end{thebibliography}
\end{document}